\documentclass{amsart}

\usepackage{amsmath}
\usepackage{amssymb}
\usepackage{MnSymbol}
\usepackage{dsfont}
\usepackage{epsfig}  		
\usepackage{xypic}
\usepackage{bbm}
\usepackage{hyperref}
\usepackage{eucal}
\usepackage{mathrsfs}

\usepackage{xcolor}

\newtheorem*{gelfondschneider}{Gelfond-Schneider Theorem}

\newtheorem*{schanuel}{Schanuel's Conjecture}

\newtheorem{thm}{Theorem}[section]
\newtheorem{prop}[thm]{Proposition}
\newtheorem{lem}[thm]{Lemma}
\newtheorem{cor}[thm]{Corollary}
\newtheorem{conjecture}[thm]{Conjecture}
\newtheorem{observation}[thm]{Observation}

\theoremstyle{definition}
\newtheorem{definition}[thm]{Definition}
\newtheorem{example}[thm]{Example}

\theoremstyle{remark}

\newtheorem{remark}{Remark}[section]

\numberwithin{equation}{section}

\hyphenation{mani-fold}
\hyphenation{homo-geneous}
\hyphenation{homo-mor-phism}
\hyphenation{Rie-mannian}
\hyphenation{in-variant}
\hyphenation{to-po-lo-gy}
\hyphenation{in-variant}
\hyphenation{a-sphe-ri-cal}
\hyphenation{multi-pli-cation}
\hyphenation{ei-gen-value}
\hyphenation{ei-gen-vector}


\newcommand{\CC}{\mathds{C}}
\newcommand{\RR}{\mathds{R}}
\newcommand{\QQ}{\mathds{Q}}

\newcommand{\ZZ}{\mathds{Z}} 
 
\newcommand{\NN}{\mathds{N}}

\newcommand{\ad}{\mathrm{ad}}
\newcommand{\Ad}{\mathrm{Ad}}

\newcommand{\Iso}{\group{Iso}}

\newcommand{\SL}{\group{SL}}

\newcommand{\SO}{\group{SO}}

\newcommand{\Aut}{\group{Aut}}

\newcommand{\group}{\mathrm} 




\newcommand{\ac}[1]{\overline{#1}^{\rm z}}

\renewcommand{\hat}{\widehat}
\renewcommand{\rho}{\varrho}

\renewcommand{\bar}{\overline}

\renewcommand{\epsilon}{\varepsilon}

\newcommand{\Z}{\mathrm{Z}}


\newcommand{\zsp}{\mathbf{0}} 
\newcommand{\met}{\langle\cdot,\cdot\rangle}
\newcommand{\one}{\{e\}}

\renewcommand{\Re}{\mathrm{Re}}
\renewcommand{\Im}{\mathrm{Im}}


\newcommand{\tensor}{\mathsf}
\newcommand{\Ric}{\tensor{Ric}}

\newcommand{\g}{\tensor{g}}

\newcommand{\tr}{\mathrm{tr}}

\DeclareMathOperator{\im}{\mathrm{im}}

\DeclareMathOperator{\Span}{\mathrm{span}}

\newcommand{\frg}{\mathfrak{g}}
\newcommand{\frh}{\mathfrak{h}}
\newcommand{\fra}{\mathfrak{a}}
\newcommand{\frb}{\mathfrak{b}}

\newcommand{\frn}{\mathfrak{n}}

\newcommand{\frw}{\mathfrak{w}}
\newcommand{\frk}{\mathfrak{k}}
\newcommand{\frj}{\mathfrak{j}}
\newcommand{\fri}{\mathfrak{i}}

\newcommand{\fru}{\mathfrak{u}}
\newcommand{\frs}{\mathfrak{s}}

\newcommand{\frr}{\mathfrak{r}}
\newcommand{\frv}{\mathfrak{v}}

\newcommand{\zen}{\mathfrak{z}}

\newcommand{\sso}{\mathfrak{so}}

\newcommand{\der}{\mathfrak{der}}
\newcommand{\ab}{\mathfrak{ab}}

\newcommand{\I}{\mathrm{I}}

\renewcommand{\i}{\mathrm{i}}
\newcommand{\e}{\mathrm{e}}
\renewcommand{\phi}{\varphi}



\setcounter{tocdepth}{1}

\begin{document}

\title[Compact pseudo-Riemannian homogeneous Einstein manifolds]{Compact pseudo-Riemannian homogeneous Einstein manifolds of low dimension}

\author[Globke]{Wolfgang Globke}
\address{Wolfgang Globke, School of Mathematical Sciences, The University of Adelaide, SA 5005, Australia}
\email{wolfgang.globke@adelaide.edu.au}

\author[Nikolaevsky]{Yuri Nikolayevsky}
\address{Yuri Nikolayevsky,
Department of Mathematics and Statistics,
La Trobe University,
Melbourne,
VIC 3086,
Australia}
\email{y.nikolayevsky@latrobe.edu.au}

\thanks{Wolfgang Globke was supported by the Australian Research Council grant {DE150101647}.
Yuri Nikolayevsky was partially supported by the
Australian Research Council
Discovery grant DP130103485.
}

\subjclass[2010]{Primary 53C30; Secondary 17B30, 53C50, 22E25, 57S20}

\begin{abstract}
Let $M$ be pseudo-Riemannian homogeneous Einstein manifold of
finite volume, and suppose a connected Lie group $G$ acts
transitively and isometrically on $M$.
In this situation, the metric on $M$
induces a bilinear form $\met$ on the Lie algebra $\frg$ of $G$
which is nil-invariant, a property closely related to invariance.
We study such spaces $M$ in three important cases.
First, we assume $\met$ is invariant, in which
case the Einstein property requires that $G$ is either solvable
or semisimple.
Next, we investigate the case where $G$ is solvable. Here,
$M$ is compact and $M=G/\Gamma$ for a lattice $\Gamma$ in $G$.
We show that in dimensions less or equal to $7$,
compact quotients $M=G/\Gamma$
exist only for nilpotent groups $G$.
We conjecture that this is true for any dimension.
In fact, this holds if Schanuel's Conjecture on transcendental
numbers is true.
Finally, we consider semisimple Lie groups $G$,
and find that $M$ is covered by a pseudo-Riemannian product of Einstein
manifolds corresponding to the compact and the non-compact factors of $G$.
\end{abstract}

\maketitle

%
%

\baselineskip=13pt

\tableofcontents


\section{Introduction and main results}
\label{sec:intro}

Let $M$ be a pseudo-Riemannian homogeneous manifold with finite
volume, such that $M=G/H$, where $G$ is a connected Lie group
with a closed subgroup $H$, and $G$
acts isometrically and almost effectively on $M$
(meaning $H$ contains no connected normal subgroup of $G$).
We further assume that the pseudo-Riemannian metric $\g_M$ on $M$
is an Einstein metric, that is, the Ricci tensor is a multiple
of the metric, $\Ric=\lambda\g_M$, for some real constant $\lambda$.
This work is motivated by questions on the existence of such
spaces $M$, and the algebraic structure of the groups $G$.

The pseudo-Riemannian metric $\g_M$ on $M$ pulls back to a
symmetric
bilinear tensor on $G$, and thus induces a symmetric bilinear
form $\met$ on the Lie algebra $\frg$ of $G$.
We review some basic notions of scalar products on Lie algebras in
Section \ref{sec:scalar}.

The fact that $M$ has finite volume implies that $\met$ is
nil-invariant (Definition \ref{def:nilinvariant}), a property
closely related to invariance of $\met$
(see Baues and Globke \cite{BG}).
It was shown by Baues and Globke \cite[Theorem 1.2]{BG} that
a nil-invariant symmetric bilinear form $\met$ on a
finite-dimensional solvable Lie algebra must be invariant.
The structure of solvable Lie algebras with invariant scalar
pro\-duct can be understood through a sequential reduction to
lower-dimensional algebras of the same type.
This process is briefly reviewed in Section \ref{sec:reduction}.

In Section \ref{sec:nilinvariantEinstein} we consider Lie groups
with Einstein metrics. If the Einstein metric is bi-invariant,
we can make an easy but very important observation:

\begin{prop}\label{prop:Einstein_solvable}
If $\g$ is a bi-invariant Einstein metric on $G$,
then
\begin{enumerate}
\item[(a)]
either $G$ is semisimple and $\g$ is a non-zero multiple of the Killing
form,
\item[(b)]
or $G$ is solvable and its Killing form vanishes.
\end{enumerate}
\end{prop}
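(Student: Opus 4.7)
My plan is to reduce the Einstein condition to an identity between two symmetric bilinear forms on $\frg$, and then invoke Cartan's criteria. Because $\g$ is bi-invariant, the induced bilinear form $\met$ on $\frg$ is $\ad$-invariant, so the Koszul formula specialises on left-invariant vector fields to $\nabla_X Y = \tfrac{1}{2}[X,Y]$. Combined with the Jacobi identity, a direct curvature computation yields the classical algebraic formula
\[
\Ric(X,Y) \;=\; -\tfrac{1}{4}\, B(X,Y), \qquad X,Y \in \frg,
\]
where $B$ denotes the Killing form of $\frg$. The derivation is purely algebraic and thus insensitive to the signature of $\met$, so it transfers to the pseudo-Riemannian setting verbatim.

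Imposing $\Ric = \lambda \met$ then gives the identity $B = -4\lambda \met$ of symmetric bilinear forms on $\frg$, and the remainder of the argument is a dichotomy on $\lambda$. If $\lambda \neq 0$, the Killing form $B$ is a non-zero scalar multiple of the non-degenerate form $\met$ and is therefore itself non-degenerate; by Cartan's criterion $\frg$ is semisimple, and $\met$ is a non-zero multiple of $B$, giving case (a). If $\lambda = 0$, then $B \equiv 0$ on $\frg$, hence in particular on $[\frg,\frg] \times \frg$, and Cartan's criterion for solvability forces $\frg$ to be solvable, giving case (b).

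There is no real obstacle once the curvature formula is in hand. The only point worth double-checking is that neither the indefinite signature of $\met$ nor possible degeneracies of $\ad$ interfere with the identification $\Ric = -\tfrac{1}{4}B$; this follows from a short trace calculation using only the $\ad$-invariance of $\met$ and the Jacobi identity, with no appeal to positivity.
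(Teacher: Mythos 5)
Your proposal is correct and follows essentially the same route as the paper: both rest on the identity $\Ric=-\tfrac{1}{4}\kappa$ for bi-invariant metrics (the paper simply cites O'Neill for it in the pseudo-Riemannian setting) and then split on whether the Einstein constant vanishes, using non-degeneracy of the metric together with Cartan's criteria. The only cosmetic difference is that the paper phrases the dichotomy as ``$G$ not semisimple $\Rightarrow$ $\kappa$ degenerate $\Rightarrow$ $\lambda=0$ $\Rightarrow$ $\kappa=0$,'' which is logically equivalent to your case split on $\lambda$.
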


Combining this proposition with results from \cite{BGZ}
shows that for any pseudo-Riemannian homogeneous Einstein manifold
$M=G/H$ of finite volume for a Lie group $G$ whose Levi subgroup
has no compact factors, $G$ is either semisimple or solvable
(Corollary \ref{cor:alternative1}),
and the stabilizer $H$ is a lattice in $G$.

We study pseudo-Riemannian homogeneous Einstein manifolds $M=G/H$
of finite volume and with $G$ a connected solvable Lie group
in Section \ref{sec:solvable}.
The stabilizer $H$ is a lattice in $G$, and
this implies that the induced bilinear form $\met$ on $\frg$ is
in fact a scalar product.
Moreover, $\met$ is invariant and therefore the Killing form of
$\frg$ vanishes in order to satisfy the Einstein condition.
This is trivially satisfied by any nilpotent Lie algebra,
so that every
compact pseudo-Riemannian nilmanifold is an Einstein manifold.
For solvable Lie algebras that are not nilpotent, the Killing
form does not vanish in general.
By a metric Lie algebra we mean a Lie algebra with an invariant
scalar product.
We find the following constraints on the dimension of $\frg$:

\begin{thm}\label{thm:einstein_dim}
Let $(\frg,\met)$ be a solvable
metric Lie algebra with Einstein scalar product of Witt index $s$.
Let $\frn$ be the nilradical of $\frg$.
If $\frg$ is not nilpotent, then $\dim\frg\geq 6$,
$\dim\frn\geq 5$,
and $s\geq 2$.
Example \ref{ex:kappa=0} shows that these estimates are sharp.
\end{thm}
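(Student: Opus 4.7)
By Proposition \ref{prop:Einstein_solvable}, the Einstein condition on the solvable metric Lie algebra $(\frg,\met)$ forces the Killing form $B$ to vanish identically. Invariance of $\met$ and the resulting cyclic identity $\met([x,y],z)=\met(x,[y,z])$ give $[\frg,\frg]^\perp=\frz(\frg)$, and combining this with $[\frg,\frg]\subseteq\frn$ together with $\frz(\frg)\subseteq\frn$ yields the chain
\[
\frn^\perp\subseteq\frz(\frg)\subseteq\frn.
\]
Hence $\frn_0:=\frn^\perp$ is a totally isotropic central ideal of dimension $k:=\dim\frg-\dim\frn$, and $s\geq k$. Since $\frg$ is not nilpotent, $k\geq1$.

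Next, I would fix a convenient orthogonal decomposition of $\frg$. Choose a vector space complement $\fra$ of $\frn$ in $\frg$ and adjust it within its $\frn$-coset so that $\fra$ becomes totally isotropic; then $\fra\oplus\frn_0$ is a hyperbolic subspace of Witt index $k$, and its orthogonal complement $\frm$ carries a non-degenerate form $\bar\met$ of dimension $\dim\frn-k$, isometric to the form induced on $\frn/\frn_0$. Pick $a\in\fra$ with $A:=\ad_a|_\frn$ non-nilpotent; this exists because $\frg$ is not nilpotent. Since $\frn_0\subseteq\frz(\frg)$, the operator $A$ annihilates $\frn_0$ and descends to an operator $\bar A$ on $\frm$; the block structure of $A$ shows that $\bar A$ is still non-nilpotent, and invariance of $\met$ makes $\bar A$ skew with respect to $\bar\met$. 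The vanishing of $B$ translates into
\[
0=B(a,a)=\tr A^2=\tr \bar A^2.
\]

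The crux is then the following linear-algebra lemma, which I would isolate separately: if $\bar A$ is a skew operator on a pseudo-Euclidean space $(V,\bar\met)$ with $\tr \bar A^2=0$ and $\bar A$ non-nilpotent, then $\dim V\geq4$ and $\bar\met$ is indefinite. The proof reduces via the Jordan decomposition $\bar A=S+N$ (with $S$ semisimple, $N$ nilpotent, $[S,N]=0$, both preserving the form) to the same statement for $S$, using $\tr \bar A^2=\tr S^2$. Invariance pairs the eigenspaces of $S$ on the complexification by the rule $V_\lambda\perp V_\mu$ unless $\lambda+\mu=0$, so nonzero eigenvalues come in $\pm$-pairs. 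A short case analysis --- real pairs contribute positively to $\tr S^2$, purely imaginary pairs negatively, and complex pairs $\pm\lambda,\pm\bar\lambda$ contribute $4\Re(\lambda^2)$ --- shows that $\tr S^2=0$ with $S\ne0$ requires at least two distinct pairs whose contributions to $\tr S^2$ cancel, ruling out $\dim V\leq3$ and the definite case.

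Assembling the pieces: $\dim\frm\geq4$ and $\bar\met|_\frm$ is indefinite, so its Witt index $s_0\geq1$. Therefore
\[
\dim\frn=k+\dim\frm\geq5,\qquad \dim\frg=2k+\dim\frm\geq6,\qquad s=k+s_0\geq2.
\]
The main technical hurdle is the linear-algebra lemma above; once it is in place, the structural bounds reduce to a direct dimension count. Example \ref{ex:kappa=0} (with $k=1$, $\frm$ abelian of signature $(3,1)$, and $\bar A$ combining a boost and a rotation of equal amplitude) realises the sharp values $\dim\frg=6$, $\dim\frn=5$, $s=2$.
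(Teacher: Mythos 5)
Your proof is correct, and while its overall shape mirrors the paper's --- a non-nilpotent skew-adjoint $\ad(a)$ with $\tr(\ad(a)^2)=0$ forces a four-dimensional indefinite ``active'' subspace, and an extra central totally isotropic direction orthogonal to it pushes the index to $2$ and the dimensions to $5$ and $6$ --- the two technical pillars are genuinely different. Where the paper extracts its extra isotropic central line from the ideal $\frj_0=\zen(\frn)\cap[\frg,\frn]$ via Propositions \ref{prop:j0} and \ref{prop:zgzn}, you observe directly that $\frn^\perp\subseteq[\frg,\frg]^\perp=\zen(\frg)\subseteq\frn$, so $\frn^\perp$ is itself a $k$-dimensional totally isotropic central ideal with $k=\codim\,\frn\geq 1$; this makes the bookkeeping transparent and gives the slightly sharper bounds $\dim\frg\geq 2k+4$ and $s\geq k+1$ for free. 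And where the paper establishes ``$\dim W_1\geq 4$ and $W_1$ indefinite'' by placing $\ad(\fra)$ inside a maximal solvable subalgebra of $\sso(p,q)$ in the Patera--Winternitz--Zassenhaus normal form \eqref{eq:PWZ}, you isolate an elementary lemma on skew operators, proved via the Jordan decomposition $\bar A=S+N$ (with $\tr\bar A^2=\tr S^2$) and the pairing $V_\lambda\perp V_\mu$ unless $\lambda+\mu=0$; this is self-contained and avoids the classification result entirely. Two small points to tighten: (i) the existence of $a\in\fra$ with $\ad(a)|_{\frn}$ non-nilpotent deserves a word --- for solvable $\frg$ the nilradical is exactly the set of $\ad$-nilpotent elements, so any nonzero $a\in\fra$ works, and nilpotency of $\ad(a)|_{\frn}$ would force nilpotency of $\ad(a)$ since $\ad(a)(\frg)\subseteq\frn$; (ii) in the case analysis of your lemma, the phrase ``two distinct pairs whose contributions cancel'' should explicitly cover the single quadruple $\pm\lambda,\pm\bar\lambda$ with $\Re(\lambda^2)=0$ --- this is precisely the eigenvalue pattern realised (up to the degenerate variant $\pm 1,\pm\i$) in Example \ref{ex:kappa=0} --- but that case already supplies four distinct eigenvalues, so the conclusion $\dim V\geq 4$ stands in every branch.
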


The existence of compact pseudo-Riemannian quotients of a
solvable Lie group $G$ requires the existence of a lattice
$\Gamma$ in $G$, and it is not clear whether such a lattice
can exist under the condition that $\met$ is Einstein.
We can rule this out for dimensions below $8$:

\begin{thm}\label{thm:no_einstein_dim6}
Let $M$ be a compact pseudo-Riemannian Einstein solvmanifold of
dimension less or equal to $7$.
Then $M$ is a nilmanifold.
\end{thm}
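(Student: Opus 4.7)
The plan is to argue by contradiction, using that the induced scalar product on $\frg$ is invariant with vanishing Killing form $B_\frg$ (by the discussion preceding the statement), and then to reduce, via the Medina-Revoy double-extension structure of Section~\ref{sec:reduction}, to the analysis of a single skew-adjoint derivation $D$ on a metric Lie algebra of dimension at most $5$, from which a Gelfond-Schneider argument yields the contradiction. Suppose $G$ is not nilpotent. By Theorem~\ref{thm:einstein_dim}, $\dim\frg\in\{6,7\}$, the nilradical $\frn$ has $\dim\frn\geq 5$, and $B_\frg=0$.

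By Mal'cev's theorem, $\Gamma\cap N$ is a lattice in $N$ and $\log(\Gamma\cap N)$ defines a $\QQ$-structure on $\frn$; moreover $\Gamma/(\Gamma\cap N)\cong\ZZ^k$ with $k=\codim\frn\in\{1,2\}$. Since $\frg$ is non-nilpotent, the nilcone of the linear map $\frg/\frn\to\End(\frn)$, $X\mapsto\ad X|_\frn$, is a proper subvariety, so we may choose $\gamma\in\Gamma$ with $\gamma=\exp(X)\cdot n$, $n\in N$, such that $D:=\ad X|_\frn$ is non-nilpotent. Invariance of $\met$ makes $D$ skew-adjoint on $\frn$, and a direct trace computation using the bracket structure of $\frg$ gives $\tr_\frn(D^2)=B_\frg(X,X)=0$.

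The Medina-Revoy reduction presents $\frg$ (up to an orthogonal abelian summand, contributing only an $\RR^m$ factor to $G$) as a double extension $\frg=\RR X\oplus\frb\oplus\RR Y$ of a metric Lie algebra $(\frb,\beta)$ with $\dim\frb=\dim\frg-2\in\{4,5\}$; here $Y$ is central, $[X,b]=D'b$ for a non-nilpotent skew-adjoint derivation $D'$ of $(\frb,\beta)$ with $\tr_\frb(D'^2)=0$, and $[b_1,b_2]=[b_1,b_2]_\frb+\beta(D'b_1,b_2)Y$. The skew-adjointness pairs the eigenvalues of $D'$ as $\{\pm\lambda\}$, and the constraints $\tr_\frb(D'^2)=0$ with $D'$ non-nilpotent and $\dim\frb\leq 5$ leave only two admissible spectra, up to zero padding in odd dimension: either $\{\pm\mu,\pm i\mu\}$ with $\mu>0$, or a complex quadruple $\{\pm(a+ib),\pm(a-ib)\}$ with $a=\pm b\neq 0$. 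Indeed, two non-zero real pairs would give $\tr(D'^2)\geq 2\mu_1^2+2\mu_2^2>0$, two imaginary pairs a strictly negative trace, and any other pattern either fails $\tr(D'^2)=0$ or is nilpotent. On $\frn=\frb\oplus\RR Y$, the eigenvalues of $D$ coincide with those of $D'$ plus an extra zero for $Y$.

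After replacing $\gamma$ by a suitable power, $\Ad(\gamma)|_\frn$ is an integer matrix in the basis coming from $\log(\Gamma\cap N)$, so all its eigenvalues are algebraic; since its semisimple part equals $\exp(D)$, the numbers $e^\mu$ and $e^{i\mu}$ are algebraic (in the complex-quadruple case, this follows from the symmetric functions $e^{2a}=e^{a+ib}\cdot e^{a-ib}$ and $2e^a\cos b=e^{a+ib}+e^{a-ib}$ together with $b=\pm a$). By the Hermite-Lindemann theorem, $\mu\in\RR\setminus\{0\}$ with $e^\mu$ algebraic forces $\mu$ transcendental and $e^\mu\neq 0,1$; the Gelfond-Schneider theorem with $\alpha=e^\mu$ and $\beta=i$ then makes $\alpha^\beta=e^{i\mu}$ transcendental, contradicting its algebraicity. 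The main obstacle is the spectral dichotomy: the bound $\dim\frb\leq 5$, equivalent to $\dim\frg\leq 7$, is precisely what keeps the list of admissible spectra short enough for Gelfond-Schneider to apply; in higher dimensions $\tr(D'^2)=0$ admits richer patterns such as $\{0,\pm a_1,\pm a_2,\pm ib\}$ with $a_1^2+a_2^2=b^2$, which escape Gelfond-Schneider and require Schanuel's Conjecture, explaining why the unconditional result cuts off at dimension $7$.
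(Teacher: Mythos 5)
Your proposal is correct and follows essentially the same route as the paper's proof: the vanishing Killing form together with a lattice element whose adjoint action is non-nilpotent forces the (at most four) non-zero eigenvalues of a skew-adjoint derivation $D$ with $\tr(D^2)=0$ into one of the two spectra $\{\pm\mu,\pm\i\mu\}$ or $\{\pm\mu(1\pm\i)\}$, each of which is killed by Gelfond--Schneider with exponent $\i$ --- this is exactly the paper's Lemma~\ref{lem:algebraic}, combined with Mostow's linearity theorem. The only difference is in packaging: the paper gets the crucial ``at most five relevant dimensions'' by restricting $\ad(a)$ to $W_1=\im\ad(a)$ (using $\ker\ad(a)\supseteq\fra\oplus\fri$) and cites the Patera--Winternitz--Zassenhaus normal form for the eigenvalue pairing, whereas you derive the pairing directly from skew-adjointness and route the dimension count through a Medina--Revoy double extension; be aware that your identification $\frn=\frb\oplus\RR Y$ is inconsistent when the nilradical has codimension two, although the eigenvalue count you actually use (non-zero spectrum confined to at most five dimensions) survives in that case as well.
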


The proof of this theorem uses the Gelfond-Schneider Theorem from
the theory of transcendental numbers to show that no lattice can
exist.
However, application of the Gelfond-Schneider Theorem
seems limited to low
dimensions.
For arbitrary dimensions, the statement would hold if Schanuel's
Conjecture on the algebraic independence of certain complex
numbers is true. We therefore conjecture:

\begin{conjecture}\label{conj:no_einstein}
Every compact pseudo-Riemannian Einstein solvmanifold
is a nilmanifold.
\end{conjecture}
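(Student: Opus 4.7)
The plan is to reduce Conjecture \ref{conj:no_einstein} to a transcendence statement and then invoke Schanuel's Conjecture. Let $M=G/\Gamma$ be a compact pseudo-Riemannian Einstein solvmanifold with $G$ simply connected solvable and $\Gamma$ a lattice. By the preceding discussion, the induced form $\met$ on $\frg$ is an invariant scalar product and the Killing form $B$ of $\frg$ vanishes, so the task is to prove that $\frg$ is nilpotent.

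My reduction proceeds in two stages. First, using Mostow's structure theory for solvmanifolds, a Malcev basis for the nilradical $\frn$ together with the rational supplement coming from $\Gamma/(\Gamma\cap\exp\frn)$ equips $\frg$ with a rational structure under which every $\Ad(\gamma)$, $\gamma\in\Gamma$, has algebraic eigenvalues. Mostow's rigidity for lattices in solvable Lie groups implies that $G$ is nilpotent as soon as $\Ad(\gamma)$ is unipotent for every $\gamma\in\Gamma$, and the argument below will in fact yield the stronger conclusion that $\ad(\log\gamma)$ is nilpotent for every $\gamma\in\Gamma$.

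Second, fix $\gamma=\exp X$ and let $\lambda_1,\ldots,\lambda_d$ be the eigenvalues of $\ad(X)$ on $\frg_\CC$; the $\e^{\lambda_i}$ are the algebraic eigenvalues of $\Ad(\gamma)$. Skew-adjointness of $\ad(X)$ with respect to $\met$ pairs the spectrum under $\lambda\mapsto -\lambda$ with matching multiplicities, and $B=0$ gives the trace identity $\sum_i\lambda_i^2=0$. Grouping the non-zero eigenvalues as $\pm\mu_1,\ldots,\pm\mu_k$ with multiplicities $m_j\in\ZZ_{>0}$, this reads $\sum_j m_j\mu_j^2=0$, a non-trivial $\QQ$-polynomial relation as soon as some $\mu_j\ne 0$. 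By Lindemann--Weierstrass each such $\mu_j$ is transcendental yet has algebraic exponential, precisely the situation Schanuel governs.

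Schanuel's Conjecture is now applied to a maximal $\QQ$-linearly independent subset of the $\mu_j$, forcing that subset to be algebraically independent; combined with the quadratic relation this forces the $\mu_j$ themselves to be $\QQ$-linearly dependent, so $\sum n_j\mu_j=0$ for integers $n_j$ not all zero. Substituting into $\sum m_j\mu_j^2=0$ produces a quadratic form in fewer variables which remains non-degenerate, since $\sum m_j z_j^2$ has positive real coefficients and its restriction to the complexification of any $\QQ$-subspace of $\RR^k$ stays non-degenerate. Iterating, with Schanuel still applicable because $\QQ$-linear combinations of the $\mu_j$ retain algebraic exponentials, the number of variables strictly decreases until only one remains, forcing $c\mu^2=0$ with $c\ne 0$ and hence $\mu=0$. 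Therefore all $\mu_j=0$, $\ad(X)$ is nilpotent, and Mostow rigidity completes the argument. The \emph{main obstacle} is Schanuel's Conjecture itself: no currently available transcendence result controls the joint behaviour of more than two independent algebraic exponentials, which is precisely why Theorem \ref{thm:no_einstein_dim6} is limited to dimension $\le 7$ and the general statement has to be posed as a conjecture. A secondary, more technical hurdle is the first-stage Mostow reduction, which must be handled carefully when the algebraic hull of $\Ad(G)$ is not generated by $\Ad(\Gamma)$ in a straightforward way.
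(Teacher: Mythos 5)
This statement is a \emph{conjecture} in the paper: no unconditional proof is given there, only (i) the case of dimension at most $7$ via the Gelfond--Schneider Theorem (Theorem \ref{thm:no_einstein_dim6}) and (ii) a conditional argument assuming Schanuel's Conjecture (Observation \ref{obs:schanuel} together with the closing Remark of Section \ref{sec:solvable}, which makes the conjecture equivalent to Lemma \ref{lem:algebraic} for all $n$). Your proposal is likewise conditional on Schanuel, and you correctly identify that as the essential obstacle; in that conditional sense your argument coincides with the paper's. The core is the same in both: Mostow's theory makes the eigenvalues of the relevant adjoint operators algebraic, the vanishing of the Killing form gives the rational quadratic relation $\sum_i\lambda_i^2=0$ among the eigenvalues of the derivation, and Schanuel applied to a maximal $\QQ$-linearly independent subset of eigenvalues (whose exponentials are algebraic) yields a contradiction.

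Two points where your write-up differs from, or falls short of, the paper's version. First, the iteration is unnecessary: once you fix a $\QQ$-basis $\alpha_1,\ldots,\alpha_d$ of the span of the nonzero eigenvalues, Schanuel forces the $\alpha_i$ to be algebraically independent, while $\sum_j m_j\mu_j^2=0$ becomes $\sum_j m_j\,\ell_j(\alpha_1,\ldots,\alpha_d)^2=0$ with rational linear forms $\ell_j$ and $m_j>0$; this is a nonzero polynomial over $\QQ$ unless every $\ell_j$, hence every $\mu_j$, vanishes. A single application of Schanuel therefore finishes the argument, which is exactly how Observation \ref{obs:schanuel} is phrased; your descent does terminate correctly, but the intermediate step (deduce only $\QQ$-linear dependence, substitute, repeat) obscures this. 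Second, and more substantively, you write an arbitrary lattice element as $\gamma=\exp X$. The exponential map of a simply connected solvable Lie group need not be surjective (for instance for the universal cover of the Euclidean motion group $\Euc(2)$), so $\log\gamma$ may not exist, and then your scheme never establishes unipotence of $\Ad(\gamma)$ for such $\gamma$. The paper instead uses Mostow's linearization to produce a nonzero $a$ in a complement $\fra$ of the nilradical $\frn$ with $\exp(\ad(a))$ conjugate into $\SL(n,\ZZ)$, and applies the transcendence argument to $\ad(a)$ restricted to $W_1=\im\ad(a)_{\rm ss}$. You flag the Mostow reduction as a secondary hurdle, which is fair, but as written this step is a genuine gap in your reduction rather than a technicality.
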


In Section \ref{sec:semisimple} we consider
pseudo-Riemannian homogeneous Einstein manifolds $M=G/H$
of finite volume, where $G$ is a semisimple Lie group.
We may assume that $G$ is simply connected, so $G=K\times S$, where
$K$ is compact semisimple and $S$ is semisimple without compact
factors.

\begin{thm}\label{thm:Einstein_semisimple}
Let $M$ be a pseudo-Riemannian homogeneous Einstein manifold
of finite volume
and $G$ a semisimple, connected and simply connected Lie group
acting transitively and almost effectively by isometries on $M$.
Identify $M$ with $G/H$ for a closed subgroup $H$ of $G$.
Let $G=K\times S$ be the decomposition of $G$ into a compact
semisimple factor $K$ and a semisimple factor $S$ without
compact simple factors.
Then:
\begin{enumerate}
\item
The identity component $H^\circ$ of $H$ is contained in $K$.
\item
$M=\hat{M}/\Gamma$, where $\hat{M}$ is a pseudo-Riemannian product of
Einstein manifolds
\[
\hat{M}=M_K\times S,
\]
with $M_K=K/(H\cap K)$,
and the Einstein metric on $S$ is a multiple of the Killing form on $S$.
\item
$\Gamma$ is the graph of a homomorphism
from a lattice in $S$ into $K$.
\end{enumerate}
\end{thm}

A classification of homogeneous Einstein manifolds of finite volume
with transitive action by a semisimple group thus reduces to
classifying Einstein quotients of compact semisimple
Lie groups,
classifying lattices in semisimple Lie groups without compact
factors together with their embeddings in products
of compact and non-compact semisimple groups whose right-translations
preserve the metric.
The first problem is a hard problem even if $\g_M$ is
a Riemannian metric, and several different approaches have been used in this case to construct examples and find non-existence
proofs, see for example D'Atri and Ziller \cite{DZ},
Wang and Ziller \cite{WZ}, or B\"ohm et al.~\cite{boehm, BK, BWZ}.

In the proof of Theorem \ref{thm:Einstein_semisimple},
the finite volume of $M$ 
requires a bi-invariant Einstein metric on $S$ which is then
necessarily given as a multiple of the Killing form.
However, there exist many other pseudo-Riemannian Einstein metrics
for most semisimple groups.
Some low-dimensional examples were constructed by Gibbons,
L\"u and Pope \cite{GLP}.
Chen and Liang \cite{CL} studied the existence of pseudo-Riemannian
Einstein metrics on three- and four-dimensional Lie groups. 
The moduli spaces of Einstein connections for simple Lie groups
were studied systematically by Derdzinski and Gal \cite{DG}.

%
%

\subsection*{Acknowledgments}
We would like to thank Anna Fino and Oliver Baues for
helpful discussions, and we would also like to thank
Michel Waldschmidt and Arturas Dubickas for useful
comments on transcendental numbers.
We would also like to thank Ines Kath for pointing out
a mistake in a previous version of Theorem \ref{thm:Einstein_semisimple}.


\section{Invariant scalar products}
\label{sec:scalar}

Let $\frg$ be a Lie algebra of finite dimension $n$
with a symmetric bilinear form $\met$.
The form $\met$ is called
\emph{invariant} if
\begin{equation}
\langle [x,y_1],y_2\rangle = - \langle y_1,[x,y_2]\rangle
\end{equation}
for all $x,y_1,y_2\in\frg$.
The \emph{metric radical}
\[
\frg^\perp = \{x\in\frg \mid \langle x,y\rangle=0 \text{ for all } y\in\frg\}
\]
is an ideal in $\frg$.

An element $x\in\frg$ is called \emph{isotropic} if
$\langle x,x\rangle=0$.
A subspace $\frv\subset\frg$ is called \emph{totally isotropic}
if $\frv\subset\frv^\perp$.

If $\met$ is non-degenerate, that is, if $\frg^\perp=\zsp$,
then the maximal dimension $s$ of a totally isotropic subspace of
$\frg$ is called the \emph{Witt index} (or simply \emph{index})
of $\met$, and the pair $(n-s,s)$ is called the \emph{signature}
of $\met$.
In this case we call $\met$ a \emph{scalar product} on $\frg$,
and the pair $(\frg,\met)$
is called a \emph{metric Lie algebra}.
A totally isotropic subspace $\frv$ of dimension $s$ is also called
a \emph{maximally isotropic} subspace.

\begin{example}
Consider $\RR^n$ with a scalar product $\met$ represented by the
matrix
\[
\begin{pmatrix}
\I_{n-s} & 0 \\
0 & -\I_s
\end{pmatrix},
\]
where $s\leq n-s$. Then $\met$ has index $s$.
The space $\RR^n$ together with $\met$ becomes an abelian
metric Lie algebra denoted by $\ab^n_s$.
\end{example}

Given a totally isotropic subspace $\fru$ of $\frg$,
we can find a non-degenerate subspace $\frw$ such that
$\fru^\perp=\frw\oplus\fru$, and a totally isotropic subspace $\frv \subset \frw^\perp$ 
such that $\frv$ is dually paired with $\fru$ by $\met$. 
The resulting decomposition
\[
\frg = \frv \oplus\frw\oplus\fru 
\]
is called a \emph{Witt decomposition} of $\frg$ relative to
$\fru$.
If $\{v_1,\ldots,v_k\}$ is a basis for $\frv$,
$\{v_1^*,\ldots,v_k^*\}$ its dual basis for $\fru$,
and $\{w_1,\ldots,w_{n-2k}\}$ an orthonormal basis for $\frw$,
then we call the basis
$\{v_1,\ldots,v_k,w_1,\ldots,w_{n-2k},v_1^*,\ldots,v_k^*\}$
a \emph{Witt basis} for $\frg$.

For our purposes, a more general notion than that of invariance
of $\met$ is required:

\begin{definition}\label{def:nilinvariant}
A symmetric bilinear form $\met$ on $\frg$ is called
\emph{nil-invariant} if for all $x_1,x_2\in\frg$,
\begin{equation}
\langle \varphi_{\rm n}x_1,x_2\rangle = - \langle x_1,\varphi_{\rm n}x_2\rangle,
\label{eq:nilinvariant}
\end{equation}
where $\varphi$ is an element of the
Lie algebra of the Zariski closure of $\Ad(G)$ in $\Aut(\frg)$,
and $\varphi_{\rm n}$ denotes the nilpotent part of its
additive Jordan decomposition, where $G$ is any connected Lie group with
Lie algebra $\frg$.
\end{definition}

For nil-invariant $\met$, \eqref{eq:nilinvariant} holds for all
$\varphi_{\rm n}=\ad(y)_{\rm n}$ with $y\in\frg$.
In particular, $\met$ is $\ad(\frn)$-invariant, where
$\frn$ is the nilradical of $\frg$.
It was shown by Baues, Globke and Zeghib \cite{BGZ} that
every nil-invariant symmetric bilinear form is invariant
for a Lie algebra $\frg=\frs\ltimes\frr$, where $\frs$ is
semisimple without factors of compact type and $\frr$ is the
solvable radical.
On the other hand, on a semisimple Lie algebra $\frk$ of compact
type, every bilinear form is nil-invariant.

\begin{remark}\label{rem:nilinvariant}
The notion of nil-invariance is relevant for homogeneous spaces
$M=G/H$ as defined in the introduction.
Here, $G$ preserves a
probability measure on $M$ since it acts isometrically.
Therefore, a theorem by Mostow \cite[Lemma 3.1]{mostow3}
implies that $\met$ is preserved by the Zariski closure  $\ac{\Ad(H)}$
of $\Ad(H)$ in $\Aut(\frg)$, and thus by
all unipotent parts of $\ac{\Ad(G)}$.
But this  means that $\met$ is nil-invariant
(see Sections 2 and 6 of Baues and Globke \cite{BG} for
further details).
\end{remark}

\begin{definition}
If a symmetric bilinear form $\met$ on $\frg$ comes from 
the pullback of an Einstein metric $\g_M$ on a homogeneous space
$M=G/H$, then we call $\met$ an \emph{Einstein form}.
If in addition $\met$ is non-degenerate, we call it an
\emph{Einstein scalar product}.
\end{definition}


\section{Reductions of solvable metric Lie algebras}
\label{sec:reduction}

Let $\frg$ be a finite-dimensional solvable Lie algebra and
$\met$ an invariant scalar product on $\frg$.
In this section, we review some of the properties for such
metric Lie algebras $(\frg,\met)$. 
Proofs can be found in Baues and Globke
\cite[Sections 4 and 5]{BG}.

\subsection{Reduction by a totally isotropic ideal}

Let $\frj \subset\zen(\frg)$ be a central totally 
isotropic ideal in $\frg$.
Then $\frj^\perp$ is an ideal in $\frg$.
In particular, we can consider the quotient Lie algebra
\[
\bar \frg = \frj^\perp / \frj.
\]
Since $\frj$ is totally isotropic, the restriction of $\met$ to
$\frj^{\perp}$ induces a symmetric bilinear form on $\bar{\frg}$,
also denoted by $\met$. The induced form on $\bar{\frg}$ is
non-degenerate since $\met$ is non-degenerate on $\frg$ and
the metric radical of its restriction to $\frj^\perp$ is precisely
$\frj$.

\begin{definition}
The metric Lie algebra $(\bar \frg, \langle\cdot,\cdot\rangle)$ 
is called the \emph{reduction} of $(\frg, \langle\cdot,\cdot\rangle)$ by the totally isotropic ideal $\frj$.
\end{definition}

Let
\begin{equation} \label{eq:decompositionW}
\frg = \fra\oplus\frj^\perp = \fra \oplus\frw\oplus\frj
\end{equation}
be a Witt decomposition relative to $\frj$, where
$\fra$ is a dual space to $\frj$ with respect to $\met$ and
$\frw$ is a non-degenerate subspace orthogonal to $\fra$ and $\frj$.

For all $x \in \frg$, we write $x = x_{\fra} + x_{\frw} + x_{\frj}$ with respect to \eqref{eq:decompositionW}.
In the following we shall frequently identify ${\frw}$ with the underlying vector space of $\bar \frg$. Thus for $x \in \frj^\perp$, the projection $\bar{x}$ of $x$ to $\bar{\frg}$ may also be identified with the element $x_{\frw} \in\frw$. Similarly, 
$[ \bar{x}, \bar{y} ]_{\bar \frg} = ([x,y])_{\frw}$ for $x,y \in  \frj^\perp$ is the Lie bracket in $\bar \frg$. 
The Lie product in $\frg$
thus gives rise to the following equations.

For all $x,y \in \frj^\perp$, 
\begin{equation}
\label{eq:defomega}
[x,y] =  [\bar{x},\bar{y}]_{\bar{\frg}} + \omega(\bar{x}, \bar{y}),
\end{equation} 
where $\omega\in\Z^2(\bar \frg, \frj)$ is a $2$-cocycle.

For all $a \in \fra$, $x \in \frj^{\perp}$,
\begin{equation}
[a , x]  = \delta_a \,\bar{x} + \xi_{a}(\bar{x}),
\label{eq:A_action}
\end{equation}
where $\xi_{a}: \bar \frg \to \frj$ is a linear map, 
and $\delta_a $ is the derivation of $\bar{\frg}$ 
induced by $\ad(a)$.

In a split situation, the maps $\xi_{a}$ vanish: 
\begin{lem}\label{lem:An1}
Assume that $[\fra, \fra] = \zsp$ (that is, $\fra$ is an abelian 
subalgebra). Then $[\fra, \frg]$ is contained in $\fra^{\perp}$. 
In particular, $\xi_{a} = 0$ for all $a \in \fra$. 
\end{lem}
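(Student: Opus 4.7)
The plan is to combine invariance of $\met$ with the abelian assumption on $\fra$ to prove the orthogonality statement, and then to read off $\xi_a=0$ from the structure of the Witt decomposition together with equation \eqref{eq:A_action}.

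First, for the containment $[\fra,\frg]\subset\fra^\perp$: given $a,a'\in\fra$ and $y\in\frg$, invariance of $\met$ yields $\langle[a,y],a'\rangle = -\langle y,[a,a']\rangle = 0$ because $[\fra,\fra]=\zsp$. Since this holds for every $a'\in\fra$, the bracket $[a,y]$ lies in $\fra^\perp$, which is the first assertion of the lemma.

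Next, to extract $\xi_a=0$: in the Witt decomposition \eqref{eq:decompositionW}, both $\fra$ and $\frj$ are totally isotropic and dually paired, while $\frw$ is orthogonal to $\fra\oplus\frj$. A dimension count then forces $\fra^\perp=\fra\oplus\frw$ (and similarly $\frj^\perp=\frw\oplus\frj$). For $a\in\fra$ and $x\in\frj^\perp$, equation \eqref{eq:A_action} writes $[a,x]=\delta_a\bar{x}+\xi_a(\bar{x})$ with $\delta_a\bar{x}\in\frw$ and $\xi_a(\bar{x})\in\frj$. The first step, however, places $[a,x]$ in $\fra^\perp=\fra\oplus\frw$, whose intersection with $\frj$ is trivial. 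Hence the $\frj$-component $\xi_a(\bar{x})$ must vanish for every $\bar{x}\in\bar\frg$, i.e., $\xi_a=0$.

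I do not expect any real obstacle: the argument is a short, direct consequence of invariance plus the standard features of a Witt decomposition. The only point where one needs to be slightly careful is the identification $\fra^\perp=\fra\oplus\frw$, which uses that $\fra$ is totally isotropic and that its pairing with $\frj$ is perfect—both properties built into the Witt decomposition recalled in Section~\ref{sec:scalar}.
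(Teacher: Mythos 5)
Your proof is correct and follows the same route the paper (via the reference to Baues--Globke) intends: invariance gives $\langle[a,y],a'\rangle=-\langle y,[a,a']\rangle=0$, and the identification $\fra^\perp=\fra\oplus\frw$ from the Witt decomposition kills the $\frj$-component $\xi_a(\bar{x})$ in \eqref{eq:A_action}. No gaps.
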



It turns out the derivation $\delta_a $ and the extension cocycle $\omega$ determine each other: 
\begin{prop}  \label{prop:A_and_O}
For all $x,y\in \frj^{\perp}$, $a\in\fra$, we have 
\begin{equation}
\label{eq:A_and_O}
   \langle [a,x],y\rangle
= \langle \omega(\bar{x}, \bar{y}), a \rangle .
\end{equation}
\end{prop}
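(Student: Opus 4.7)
The plan is to derive the identity as a direct consequence of invariance of $\met$ together with the orthogonality relations built into the Witt decomposition $\frg = \fra \oplus \frw \oplus \frj$.

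First I would apply the cyclic form of invariance of $\met$. From $\langle[x,a],y\rangle + \langle a,[x,y]\rangle = 0$ and antisymmetry of the bracket together with symmetry of $\met$, one obtains the identity
\[
\langle [a,x], y \rangle = \langle a, [x,y] \rangle
\]
for all $a,x,y \in \frg$.

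Next, since $x, y \in \frj^{\perp}$, I can substitute the decomposition \eqref{eq:defomega}:
\[
[x,y] = [\bar{x},\bar{y}]_{\bar{\frg}} + \omega(\bar{x},\bar{y}),
\]
where (under the identification of $\bar\frg$ with $\frw$) the first summand lies in $\frw$ and the second lies in $\frj$. Pairing with $a \in \fra$ gives
\[
\langle a, [x,y]\rangle = \langle a, [\bar x,\bar y]_{\bar\frg}\rangle + \langle a, \omega(\bar x,\bar y)\rangle.
\]
By the Witt decomposition, $\fra$ is orthogonal to $\frw$, so the first term vanishes, leaving $\langle a, [x,y]\rangle = \langle \omega(\bar x,\bar y), a\rangle$ by symmetry of $\met$. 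Combined with the previous display this yields \eqref{eq:A_and_O}.

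There is no real obstacle: the statement is essentially an unpacking of invariance against the Witt decomposition, with the decisive inputs being (i) the cyclic property of an invariant form and (ii) the fact that $\fra \perp \frw$ in the chosen splitting. The only thing to verify carefully is that the first summand $[\bar x,\bar y]_{\bar\frg}$ really sits in $\frw$ after the identification of $\bar\frg$ with the vector space $\frw$, which is exactly how the bracket on $\bar\frg$ was defined in the paragraph preceding \eqref{eq:defomega}.
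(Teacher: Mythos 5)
Your argument is correct and is the natural one; the paper itself gives no proof here (it defers to Baues--Globke \cite[Sections 4 and 5]{BG}), but your derivation --- the cyclic identity $\langle [a,x],y\rangle=\langle a,[x,y]\rangle$ from invariance, followed by splitting $[x,y]$ via \eqref{eq:defomega} and killing the $\frw$-component using $\fra\perp\frw$ --- is exactly the intended unpacking of the Witt decomposition. The one point you flag, that $[x,y]$ has no $\fra$-component so that \eqref{eq:defomega} accounts for all of it, is guaranteed because $\frj^\perp$ is an ideal, hence closed under the bracket.
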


Every non-abelian solvable metric Lie algebra $(\frg, \met)$ with
invariant scalar product $\met$ admits a non-trivial
totally isotropic  and central ideal $\frj$,
see Proposition \ref{prop:zgzn}.
Therefore, $(\frg, \met)$ reduces 
to a metric Lie algebra  $(\bar \frg, \met)$ of lower dimension.
This reduction is still possible for abelian Lie algebras as
long as $\met$ is indefinite.
Iterating this procedure we obtain:

\begin{prop} \label{prop:completereduction}
Let $(\frg, \met)$ be a solvable metric 
Lie algebra with  nil-invariant non-degenerate symmetric bilinear form $\met$. After a finite sequence of successive reductions 
with respect to one-dim\-en\-sion\-al totally isotropic and central ideals,  
$(\frg, \met)$ reduces to an abelian metric Lie algebra 
with a definite scalar product $\met$.
\end{prop}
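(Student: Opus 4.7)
The plan is to induct on $\dim\frg$, using the reduction procedure from Section \ref{sec:reduction} to step down the dimension until the process terminates in an abelian metric Lie algebra with definite scalar product.

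First I would verify that a single reduction step strictly decreases the dimension: if $\frj\subset\zen(\frg)$ is a one-dimensional totally isotropic central ideal, then $\dim\frj^\perp=\dim\frg-1$ (since $\met$ is non-degenerate) and therefore $\dim\bar\frg=\dim\frj^\perp-\dim\frj=\dim\frg-2$. So two dimensions are shed per reduction, and in particular the process must terminate after finitely many steps.

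Next I would argue that the reduced algebra $(\bar\frg,\met)$ inherits the hypotheses of the proposition. Solvability is immediate because $\bar\frg$ is a quotient of the subalgebra $\frj^\perp$ of the solvable algebra $\frg$. Non-degeneracy of the induced form on $\bar\frg$ was already observed in the definition of the reduction. For nil-invariance, I would note that nilpotent elements of $\ad_{\bar\frg}(\bar x)$ lift, via the short exact sequence $\zsp\to\frj\to\frj^\perp\to\bar\frg\to\zsp$, to nilpotent parts of derivations of $\frj^\perp$ coming from $\ad_\frg$-elements, and that the condition \eqref{eq:nilinvariant} descends from $\frj^\perp$ to $\bar\frg$ because $\frj$ is in the metric radical of the restricted form. (In the solvable case this is less delicate since \cite[Theorem 1.2]{BG}, cited in the introduction, actually gives full invariance on $\frg$, and invariance of $\met|_{\frj^\perp}$ descends cleanly to invariance of the induced form on $\bar\frg$ by a direct check.)

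With the inductive setup in place, the inductive step runs as follows. If $\bar\frg$ is non-abelian, Proposition \ref{prop:zgzn} (invoked as stated right before Proposition \ref{prop:completereduction}) supplies a non-trivial totally isotropic central ideal in $\bar\frg$; choosing any one-dimensional subspace of it, we may reduce again. If $\bar\frg$ is abelian but the induced form $\met$ is indefinite, then any isotropic vector spans a one-dimensional subspace which, in an abelian Lie algebra, is automatically a central totally isotropic ideal; reduce once more. The procedure thus halts only when $\bar\frg$ is abelian and the induced scalar product is definite, which is the claimed conclusion.

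The one potential obstacle is the verification that nil-invariance (rather than plain invariance) passes to the reduction, since \eqref{eq:nilinvariant} refers to nilpotent parts of elements in the Lie algebra of the Zariski closure of $\Ad(G)$, an object that a priori does not descend to the quotient. However, in the solvable case \cite[Theorem 1.2]{BG} promotes nil-invariance to genuine invariance of $\met$, after which this subtlety disappears and the inductive step is a straightforward consequence of the constructions recalled above.
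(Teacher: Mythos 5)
Your proposal is correct and follows the same route the paper takes: the paper itself only sketches this argument (iterate Proposition \ref{prop:zgzn} in the non-abelian case and pick an isotropic line in the abelian indefinite case, with full details deferred to \cite[Sections 4 and 5]{BG}), and your write-up is exactly that iteration with the dimension count and the inheritance of hypotheses filled in. Your observation that \cite[Theorem 1.2]{BG} upgrades nil-invariance to invariance on a solvable algebra, so that only plain invariance needs to descend to the reduction, is the right way to dispose of the one delicate point.
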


If a reduction $(\bar{\frg},\met)$ has a definite
scalar product, then it cannot be reduced further.
In this case we call it a \emph{complete reduction}.
From Proposition \ref{prop:completereduction} we immediately
obtain:

\begin{cor}\label{cor:definite_base}
If $\dim\frg=n$ and the Witt index of $\met$ is $s$, then
a complete reduction of $(\frg,\met)$ is isometric to
$(\ab^{n-2s}_0,\met_0)$,
where $\met_0$ denotes a definite
scalar product on the underlying vector space of $\ab^{n-2s}_0$.
\end{cor}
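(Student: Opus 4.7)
The plan is to deduce this directly from Proposition \ref{prop:completereduction} by tracking how the dimension and Witt index change under a single reduction by a one-dimensional totally isotropic central ideal, and then iterating.

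First I would establish the following one-step invariants. Let $\frj\subset\zen(\frg)$ be a one-dimensional totally isotropic central ideal, and let $\frg = \fra\oplus\frw\oplus\frj$ be the associated Witt decomposition. Since $\met$ is non-degenerate, $\dim\frj^\perp = n-1$, and since $\bar\frg = \frj^\perp/\frj$ is identified with $\frw$, the reduction $(\bar\frg,\met)$ has dimension $n-2$. Moreover, $\fra\oplus\frj$ carries the hyperbolic plane and is orthogonal to $\frw$, so the signature of $\met$ on $\frg$ equals the signature of $\met|_{\frw}$ plus $(1,1)$. Writing the signature of $(\frg,\met)$ as $(n-s,s)$, it follows that the reduction $(\bar\frg,\met)$ has signature $(n-s-1,s-1)$, hence Witt index $s-1$.

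Next I would proceed by induction on $s$. By Proposition \ref{prop:completereduction}, $(\frg,\met)$ can be reduced by a finite sequence of one-dimensional totally isotropic central ideals to an abelian metric Lie algebra $(\tilde\frg,\tilde{\met})$ with definite scalar product. By the one-step calculation above, each reduction drops the dimension by $2$ and the Witt index by $1$. Since the process must terminate with a definite form, that is, with Witt index $0$, there are exactly $s$ reductions, and the final metric Lie algebra has dimension $n - 2s$.

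Finally, the limit $(\tilde\frg,\tilde{\met})$ is abelian of dimension $n-2s$ with definite scalar product, so choosing any orthonormal basis produces an isometry with $\ab^{n-2s}_0$ equipped with some definite scalar product $\met_0$. This gives the claimed complete reduction.

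The only genuine step is the signature/Witt-index bookkeeping for one reduction; everything else is immediate from Proposition \ref{prop:completereduction}. The main thing to be careful about is that the identification of $\bar\frg$ with $\frw$ really is an isometry of the induced form, so that the signature of $\met$ on $\bar\frg$ equals that on $\frw$, from which the drop in Witt index follows cleanly.
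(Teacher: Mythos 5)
Your argument is correct and follows exactly the route the paper intends: the paper states this corollary as an immediate consequence of Proposition \ref{prop:completereduction}, and your one-step bookkeeping (each reduction by a one-dimensional totally isotropic central ideal removes a hyperbolic plane, dropping the dimension by $2$ and the Witt index by $1$, so the process terminates after exactly $s$ steps in a definite abelian metric Lie algebra of dimension $n-2s$) is precisely the omitted verification.
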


\subsection{The characteristic ideal  $\boldsymbol{\zen(\frn)\cap[\frg,\frn]}$.} \label{sec:j0}
Let $\frn$ denote the nilradical of $\frg$, and
\begin{equation}
\frj_0 = \zen(\frn) \cap [\frg,\frn].
\label{eq:j}
\end{equation}
Occasionally we will also write $\frj_0(\frg)$ to distinguish
between the ideals of different Lie algebras.
The ideal $\frj_0$ is totally isotropic,
and $\frj_0^\perp$ is an ideal in $\frg$.

A fundamental property is:

\begin{prop}\label{prop:j0}
$\frg$ is abelian if and only if\/ $\frj_0 = \zen(\frn) \cap [\frg,\frn] ={\bf 0}$.
\end{prop}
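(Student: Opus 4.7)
The forward direction is immediate: if $\frg$ is abelian then $\frn=\frg$ and $[\frg,\frn]=\zsp$, whence $\frj_0=\zsp$. The work lies in the converse, and the plan is to realize $\frj_0$ as the metric radical of the restriction of $\met$ to the ideal $[\frg,\frn]$ and then exploit its vanishing.

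The key identification is $[\frg,\frn]^{\perp}=\zen(\frn)$. Writing invariance in the associative form $\langle[y,n],x\rangle=\langle y,[n,x]\rangle$ and using that $\met$ is non-degenerate, one sees that $x\in[\frg,\frn]^{\perp}$ if and only if $[n,x]=0$ for every $n\in\frn$, i.e.\ $x$ lies in the centralizer $\Cen_{\frg}(\frn)$ of $\frn$ in $\frg$. Standard solvable Lie algebra theory then gives $\Cen_{\frg}(\frn)=\zen(\frn)$: since $[\frg,\frg]\subseteq\frn$, brackets inside $\Cen_{\frg}(\frn)$ land in $\frn\cap\Cen_{\frg}(\frn)=\zen(\frn)$, so $\Cen_{\frg}(\frn)$ is a nilpotent ideal of $\frg$ and by maximality of the nilradical it must sit inside $\frn$.

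With this, $\frj_0=[\frg,\frn]\cap[\frg,\frn]^{\perp}$ is precisely the $\met$-radical of the subspace $[\frg,\frn]$. Assuming $\frj_0=\zsp$, the restriction of $\met$ to $[\frg,\frn]$ is non-degenerate, so we obtain an orthogonal splitting $\frg=[\frg,\frn]\oplus\zen(\frn)$. Both summands lie inside $\frn$, forcing $\frg=\frn$, so $\frg$ is nilpotent. In this situation $[\frg,\frn]=[\frg,\frg]$ and $\zen(\frn)=\zen(\frg)$, so $\frj_0=\zen(\frg)\cap[\frg,\frg]$; but this intersection contains the last non-vanishing term of the lower central series of any non-abelian nilpotent Lie algebra, so $\frj_0=\zsp$ forces $\frg$ to be abelian.

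The only genuinely computational step is the initial identification $[\frg,\frn]^{\perp}=\zen(\frn)$; once it is in hand, everything else is formal. I do not anticipate a real obstacle, since the non-trivial Lie-algebraic input is the classical fact about centralizers of the nilradical in a solvable Lie algebra.
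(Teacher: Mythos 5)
Your proof is correct: the identity $[\frg,\frn]^{\perp}=\Cen_{\frg}(\frn)=\zen(\frn)$ (via invariance, non-degeneracy, and the fact that the centralizer of the nilradical in a solvable Lie algebra is a nilpotent ideal) makes $\frj_0$ the radical of $\met$ restricted to $[\frg,\frn]$, and the splitting $\frg=[\frg,\frn]\oplus\zen(\frn)\subseteq\frn$ plus the lower-central-series argument finishes the converse. The paper itself does not reproduce a proof but defers to Baues--Globke \cite{BG}, where the argument rests on essentially this same orthogonality identity, so your route matches the intended one.
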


\begin{prop}\label{prop:zgzn}
If $\frg$ is non-abelian, then
$\frj_0$ contains a non-zero totally isotropic
central ideal of $\frg$.
\end{prop}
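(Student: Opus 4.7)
The plan is to treat the nilpotent and non-nilpotent cases of $\frg$ separately, and in each produce a non-zero $\frg$-central subspace of $\frj_0$. Since $\frj_0$ is already totally isotropic, any such subspace will automatically be a totally isotropic central ideal of $\frg$. When $\frg$ is nilpotent, $\frn = \frg$, so $\frj_0 = \zen(\frg) \cap [\frg,\frg]$ is tautologically contained in $\zen(\frg)$; combined with $\frj_0 \ne \zsp$ from Proposition \ref{prop:j0}, this settles the nilpotent case.

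Assume from now on that $\frg$ is not nilpotent. My candidate is $\frn^\perp$. Non-degeneracy of $\met$ together with $\frn \subsetneq \frg$ gives $\frn^\perp \ne \zsp$. Using $[\frg,\frg] \subset \frn$ and the identity $[\frg,\frg]^\perp = \zen(\frg)$ (a standard consequence of invariance and non-degeneracy), one obtains $\frn^\perp \subset \zen(\frg) \subset \frn$, since $\zen(\frg)$ is an abelian ideal and hence sits in the nilradical. In particular $\frn^\perp$ lies in $\zen(\frg) \subset \zen(\frn)$, is totally isotropic (being contained in both $\frn$ and $\frn^\perp$), and is a $\frg$-central ideal.

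The remaining inclusion $\frn^\perp \subset [\frg,\frn]$, equivalent by non-degeneracy to $[\frg,\frn]^\perp \subset \frn$, is the key step. Applying invariance in the form $\langle z,[x,n]\rangle = \langle [z,x],n\rangle$ gives
\[
[\frg,\frn]^\perp = \{z \in \frg : \ad(z)(\frg) \subset \frn^\perp\}.
\]
Since $\frn^\perp \subset \zen(\frg)$, any such $z$ satisfies $\ad(z)^2 = 0$, so $\ad(z)$ is nilpotent; the classical fact that in a solvable Lie algebra over characteristic zero the nilradical coincides with the set of elements with nilpotent adjoint then forces $z \in \frn$. Combining the steps above, $\frn^\perp$ is a non-zero totally isotropic $\frg$-central ideal contained in $\frj_0$.

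I expect the inclusion $\frn^\perp \subset [\frg,\frn]$ to be the main obstacle, as it requires the interplay between invariance (for computing $[\frg,\frn]^\perp$) and the intrinsic characterization of the nilradical via nilpotent adjoints. The rest of the argument is essentially bookkeeping about centralizers and orthogonal complements.
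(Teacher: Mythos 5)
Your proof is correct. Note that the paper does not actually print a proof of Proposition \ref{prop:zgzn}: all statements in Section \ref{sec:reduction} are quoted from Baues and Globke \cite{BG}, so there is no in-text argument to compare against; your write-up is a valid self-contained substitute. The nilpotent case is immediate from Proposition \ref{prop:j0} together with the total isotropy of $\frj_0$ (which the paper states just before the proposition), and in the non-nilpotent case you identify the ideal concretely as $\frn^\perp$. The chain $\frn^\perp\subseteq[\frg,\frg]^\perp=\zen(\frg)\subseteq\frn$ is right, and the one genuinely non-formal step, $[\frg,\frn]^\perp\subseteq\frn$, goes through as you say: any $z$ with $[z,\frg]\subseteq\frn^\perp\subseteq\zen(\frg)$ satisfies $\ad(z)^2=0$, and the characterization of the nilradical of a solvable Lie algebra in characteristic zero as the set of ad-nilpotent elements (a standard consequence of Lie's and Engel's theorems) then forces $z\in\frn$. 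This is also consistent with how the ideal is used later in the paper, where the proof of Theorem \ref{thm:einstein_dim} invokes a non-trivial ideal $\fri\subseteq\frj_0\cap\zen(\frg)$; pinning it down as $\frn^\perp$ in the non-nilpotent case gives slightly more information than the statement demands.
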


Invariance of $\met$ then implies
$[\frg,\frg]\subset\frj_0^\perp$.

\subsection{Double extensions}\label{subsec:double_extension}

The process of double extension reverses reduction.
It was first introduced by Medina and Revoy \cite{MR}.
Given an arbitrary Lie algebra $\bar{\frg}$ with an invariant
scalar product $\met$, a Lie algebra $\fra$ together with a
representation $\delta$ of $\fra$ by
skew-symmetric derivations $\delta_a \in\der(\bar{\frg})$,
and a two-cocycle $\omega\in\mathrm{Z}^2(\bar{\frg},\fra^*)$
defined by
\[
\omega({x},{y})(a) = \langle\delta_a \, x,y\rangle,
\]
where $a\in\fra$, ${x},{y}\in\bar{\frg}$,
we can define a Lie algebra product on the vector space
$\frg=\fra\oplus\bar{\frg}\oplus\fra^*$ by
\[
[a_1+{x}_1+\alpha_1,a_2+{x}_2+\alpha_2] 
=
[a_1,a_2]_{\fra}
\ +\
[{x}_1,{x}_2]_{\bar{\frg}}+\delta_{a_1}{x}_2-\delta_{a_2}{x}_1
\ +\
\omega({x}_1,{x}_2),
\]
where $a_i\in\fra$, ${x}_i\in\bar{\frg}$ and $\alpha_i\in\fra^*$.

We can extend the invariant inner product $\met$ on $\bar{\frg}$
to the Lie algebra $\frg$ via
\[
\langle a_1+{x}_1+\alpha_1,a_2+{x}_2+\alpha_2\rangle
=
\langle {x}_1,{x}_2\rangle
+\alpha_1(a_2)+\alpha_2(a_1).
\]

\begin{definition}
We call $\frg$ the \emph{double extension} of $\bar{\frg}$
by $(\fra,\delta)$.
\end{definition}

Observe that $\frj=\fra^*$ is a totally isotropic ideal contained
in $\zen(\frg)$, and that $\frj^\perp=\bar{\frg}\oplus\frj$.
So we can apply reduction by $\frj$ as defined above, and
thereby recover the metric Lie algebra $(\bar{\frg},\met)$
from $(\frg,\met)$.


\section{Nil-invariant Einstein metrics}
\label{sec:nilinvariantEinstein}

The Ricci tensor $\Ric$ for a bi-invariant pseudo-Riemannian
metric on a Lie group $G$ is determined by
\begin{equation}
\Ric|_{\frg}=-\frac{1}{4}\kappa,
\label{eq:Ric}
\end{equation}
where $\kappa$ is the Killing form of $G$
(O'Neill \cite[Corollary 11.10]{oneill}).
Recall that $(G,\g)$ is an \emph{Einstein manifold} if
\begin{equation}
\Ric=\lambda\g
\label{eq:Einstein}
\end{equation}
for some constant $\lambda$.

If $G$ is not semisimple, then $\kappa$ is degenerate, and so the
only way to satisfy both \eqref{eq:Ric} and \eqref{eq:Einstein} is
with $\lambda=0$.
This in turn implies that $\kappa=0$, which means $G$ does not have
semisimple factors.
We have thus shown:

{
\renewcommand{\thethm}{\ref{prop:Einstein_solvable}}
\begin{prop}
If $\g$ is a bi-invariant Einstein metric on $G$,
then
\begin{enumerate}
\item[(a)]
either $G$ is semisimple and $\g$ is a non-zero multiple of the Killing
form,
\item[(b)]
or $G$ is solvable and its Killing form vanishes.
\end{enumerate}
\end{prop}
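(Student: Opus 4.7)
The proof plan is essentially to combine the two displayed formulas \eqref{eq:Ric} and \eqref{eq:Einstein} and then invoke Cartan's criterion. Since $\g$ is bi-invariant, the Ricci endomorphism on $\frg$ is $-\tfrac14\kappa$, while the Einstein condition forces $\Ric = \lambda\g$ on $\frg$. Equating these gives the single identity
\begin{equation*}
\kappa = -4\lambda\,\g \quad\text{on } \frg.
\end{equation*}
From here the argument splits into the two cases of the proposition according to whether $\lambda$ vanishes or not.

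First I would treat the case $\lambda\neq 0$. Since $\g$ is non-degenerate (it is a pseudo-Riemannian metric), the identity $\kappa=-4\lambda\,\g$ implies that $\kappa$ is non-degenerate on $\frg$, and $\g$ is a non-zero scalar multiple of $\kappa$. Cartan's criterion (non-degeneracy of the Killing form) then gives that $\frg$ is semisimple, which yields alternative (a).

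Next I would treat the case $\lambda=0$. Then the identity forces $\kappa=0$ on $\frg$, so in particular $\kappa$ vanishes on $[\frg,\frg]$. Cartan's criterion for solvability implies that $\frg$ is solvable, yielding alternative (b).

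There is no real obstacle here: both halves are immediate from Cartan's criterion once \eqref{eq:Ric} and \eqref{eq:Einstein} are combined. The only mildly subtle point to flag is that the dichotomy is exhaustive because $\g$ being non-degenerate rules out any intermediate possibility between $\kappa$ being a non-zero multiple of $\g$ and $\kappa$ being identically zero; no mixed Levi decomposition with both a semisimple and a non-trivial solvable part can occur, since $\kappa$ would then be degenerate but non-zero, incompatible with $\kappa=-4\lambda\,\g$ for any $\lambda\in\RR$.
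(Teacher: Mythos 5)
Your proof is correct and follows essentially the same route as the paper's: both combine the identity $\Ric|_{\frg}=-\tfrac14\kappa$ with the Einstein condition to get $\kappa=-4\lambda\,\g$, then use non-degeneracy of $\g$ together with Cartan's criterion to force the dichotomy. The only cosmetic difference is that you split on $\lambda=0$ versus $\lambda\neq0$, whereas the paper splits on whether $G$ is semisimple; the substance is identical.
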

\addtocounter{thm}{-1}
}

We can apply similar arguments to a pseudo-Riemannian
homogeneous Einstein manifold $M$ of finite volume.
Let $G$ be a connected Lie group that acts transitively
and isometrically on $M$. Further, assume that $G$ acts almost
effectively.
Then $M=G/H$ for a closed subgroup of $H$ whose connected
component does not contain a non-trivial normal subgroup of $G$.

Let $\frg$ and $\frh$ denote the respective Lie algebras of $G$
and $H$.
By pulling back the pseudo-Riemannian metric from $M$ to $G$,
we obtain a symmetric bilinear tensor $\g_G$ on $G$ which induces
a symmetric bilinear form $\met$ on $\frg$.
The metric radical $\frg^\perp$ of $\met$ coincides with $\frh$,
and $\met$ is $\Ad(H)$-invariant.
Recall from Remark \ref{rem:nilinvariant} that this implies that
$\met$ is nil-invariant.

Assume $G$ to be simply connected and
let
\[
G=(K\times S)\ltimes R
\]
be the Levi decomposition of $G$,
where $K$ is compact semisimple, $S$ is semisimple without
compact factors, and $R$ is the solvable radical of $G$.
Let $\frk$, $\frs$ and $\frr$ denote
their respective Lie algebras.

\begin{thm}[Baues, Globke \& Zeghib \cite{BGZ}]\label{thm:invariance}
Let $\frg$ be a finite-dimensional Lie algebra with nil-invariant symmetric bilinear
form $\met$.
Then:
\begin{enumerate}
\item
The form $\met$ on $\frg$ is invariant under
$\ad_{\frg}(\frs\ltimes\frr)$.
\item
The restriction $\met_{\frs\ltimes\frr}$
of $\met$ to $\frs\ltimes\frr$
is invariant under the adjoint action of $\frg$. 
In particular, $\met_{\frs\ltimes\frr}$ is invariant.
\end{enumerate}
\end{thm}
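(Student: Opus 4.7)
The strategy is to work inside the algebraic envelope $\mathfrak{h}=\Lie\bigl(\overline{\Ad(G)}\bigr)\subseteq\mathrm{End}(\frg)$, which contains $\ad(\frg)$ and is closed under the Jordan decomposition in $\GL(\frg)$. By the definition of nil-invariance, every nilpotent element of $\mathfrak{h}$ is $\met$-skew. The set $\mathfrak{h}^{\mathrm{skew}}$ of $\met$-skew elements of $\mathfrak{h}$ is a Lie subalgebra, since the commutator of two $\met$-skew operators is again $\met$-skew. Part~(1) thus amounts to showing $\ad_\frg(\frs\ltimes\frr)\subseteq\mathfrak{h}^{\mathrm{skew}}$.

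For $\ad(\frs)$, I would use that a real semisimple Lie algebra without compact factors is Lie-generated by its $\ad$-nilpotent elements (via an Iwasawa decomposition $\frs=\frk_0\oplus\fra\oplus\frn_0$, where $\frn_0$ together with its opposite generates $\frs$). By the abstract Jordan decomposition for semisimple Lie algebras, an $\ad_\frs$-nilpotent element acts nilpotently in any finite-dimensional representation of $\frs$, in particular via $\ad_\frg$ on $\frg$; hence these generators lie in $\mathfrak{h}^{\mathrm{skew}}$, and closure under brackets gives $\ad_\frg(\frs)\subseteq\mathfrak{h}^{\mathrm{skew}}$. For $\ad(\frr)$, fix $x\in\frr$ and write $\ad(x)=\ad(x)_{\mathrm{s}}+\ad(x)_{\mathrm{n}}$ in $\mathfrak{h}$; nil-invariance places $\ad(x)_{\mathrm{n}}$ in $\mathfrak{h}^{\mathrm{skew}}$, so only $\ad(x)_{\mathrm{s}}$ remains. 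Since $[\frg,\frr]\subseteq\frn$ we have $\ad(x)(\frg)\subseteq\frn$, and because $\ad(x)_{\mathrm{s}}$ is a polynomial in $\ad(x)$ without constant term (the eigenvalue $0$ is forced by $\ad(x)$ annihilating $\frg/\frn$), also $\ad(x)_{\mathrm{s}}(\frg)\subseteq\frn$. The semisimple parts $\{\ad(y)_{\mathrm{s}}:y\in\frr\}$ generate an abelian toral subalgebra $\mathfrak{t}_\frr\subseteq\mathfrak{h}$; decompose $\frg_\CC=\bigoplus_\lambda V_\lambda$ into joint weight spaces. The target reduces to proving the weight-duality $\langle V_\lambda,V_\mu\rangle=0$ whenever $\lambda+\mu\neq 0$, since then
\[
\langle\ad(x)_{\mathrm{s}}\xi,\eta\rangle+\langle\xi,\ad(x)_{\mathrm{s}}\eta\rangle=(\lambda+\mu)\langle\xi,\eta\rangle=0
\]
on pure weight vectors $\xi\in V_\lambda$, $\eta\in V_\mu$. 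Weight-duality is extracted by decomposing any $y\in\frn$ into joint weight vectors $y_\nu\in\frn\cap V_\nu$: each $\ad(y_\nu)$ is nilpotent, hence $\met$-skew, and shifts $\mathfrak{t}_\frr$-weights by $\nu$, so iterating the identity $\langle\ad(y_\nu)\xi,\eta\rangle+\langle\xi,\ad(y_\nu)\eta\rangle=0$ through the weight grading annihilates the $\met$-pairings outside the dual-weight locus.

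For part~(2), the invariance of $\met|_{\frs\ltimes\frr}$ under $\ad(x)$ for $x\in\frs\ltimes\frr$ is immediate from part~(1). For $x\in\frk$, the ideal property of $\frs\ltimes\frr$ means $\ad(x)$ preserves it, and $[\frk,\frs]=0$ collapses the $\frs$-component, so only the claims $\langle\ad(x)\frr,\frs\rangle=0$ and the skew-symmetry of $\ad(x)|_\frr$ for $\met|_\frr$ remain open. Both follow by combining nil-invariance of $\ad(y)$ for $y\in\frn$ with the Jacobi identity $[x,[y,\cdot]]=[y,[x,\cdot]]+[[x,y],\cdot]$: since $[x,y]\in\frn$, both $\ad(y)$ and $\ad([x,y])$ are already $\met$-skew, and the identity rewrites any $\ad(x)$-invariance relation as a combination of already-controlled $\ad(\frn)$-invariance relations. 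The main obstacle is the weight-duality step in part~(1) for $\ad(x)_{\mathrm{s}}$ with $x\in\frr$: while nil-invariance handles nilpotent parts for free, extracting skew-symmetry of the toral part requires translating the algebraic Jordan decomposition inside $\mathfrak{h}$ into concrete weight-space orthogonality for $\met$, and it is here that the interplay between the nilpotent $\ad(\frn)$-skew operators and the semisimple parts of $\ad(\frr)$ must be exploited most delicately.
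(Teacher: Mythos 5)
First, a point of reference: the paper does not prove Theorem \ref{thm:invariance} at all --- it is imported verbatim from \cite{BGZ} (listed as in preparation), so there is no in-paper argument to compare yours against; your attempt has to be judged on its own terms. The overall architecture is right: you work in the algebraic hull, use that the $\met$-skew operators form a Lie subalgebra containing all nilpotent elements of that hull, and split the problem by the Levi decomposition. The $\frs$-part is complete and correct: a semisimple algebra without compact factors is generated by its $\ad$-nilpotent elements, these act nilpotently in every finite-dimensional representation, a nilpotent element of the hull equals its own nilpotent Jordan part and is therefore skew by the definition of nil-invariance, and closure under brackets finishes it.

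The $\frr$-part is where the gap is. Two issues. (i) The semisimple parts $\{\ad(y)_{\rm s}:y\in\frr\}$ need not commute: they lie in possibly different maximal tori of the (solvable, algebraic) hull, which are conjugate but not equal --- already two upper-triangular matrices with the same distinct diagonal but different strictly upper parts have non-commuting semisimple parts. So there is no joint weight decomposition; you must fix one $x\in\frr$ and use the eigenspaces $\frg_\lambda$ of the single operator $\sigma=\ad(x)_{\rm s}$. This is repairable. (ii) Not repairable as written is the weight-duality step, which you yourself call ``the main obstacle'': the sentence about iterating the nilpotent identities ``through the weight grading'' is precisely the content of the theorem in the solvable direction and is not a proof. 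The sub-case $\lambda,\mu\neq0$, $\lambda+\mu\neq0$ does follow easily, because then $\frg_\lambda,\frg_\mu\subseteq\frn_\CC$ and skewness of $\ad(\xi),\ad(\eta)$ for $\xi,\eta\in\frn_\CC$ forces $\ad(x)$, hence $\sigma$, to be skew on $\frn_\CC\times\frn_\CC$. But the sub-case $\mu=0$ with the second vector outside $\frn$ requires a separate, specific argument, for instance: apply skewness of $\ad(\xi)$, $\xi\in\frg_\lambda\subseteq\frn_\CC$, to the pair $(x,x)$ to get $\langle\ad(x)\frg_\lambda,x\rangle=0$, use invertibility of $\ad(x)|_{\frg_\lambda}=\lambda+\nu$ to conclude $\langle\frg_\lambda,x\rangle=0$, and only then feed this back into the pair $(x,a)$ for general $a\in\frg_0$ using $[\frg_\lambda,\frg_0]\subseteq\frg_\lambda$. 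Nothing in your text performs this step. Likewise, in part (2) the claim that the Jacobi identity ``rewrites any $\ad(x)$-invariance relation as a combination of already-controlled $\ad(\frn)$-relations'' for $x\in\frk$ does not work as a mechanism: not every element of $\frr$ lies in $[\frn,\frg]$, and the needed orthogonality $\langle[\frk,\frr],\frr^{\frk}\oplus\frs\rangle=0$ does not fall out of formal bracket manipulations --- this is one of the genuinely delicate points of \cite{BGZ}. In summary: correct skeleton and a correct $\frs$-argument, but the two hard steps (skewness of the toral part of $\ad(\frr)$, and $\ad(\frk)$-invariance of the restricted form) are asserted rather than proven.
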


Since the stabilizer algebra $\frh$ coincides with the
metric radical $\frg^\perp$, invariance of a scalar product
implies that $\frh$ is an ideal in $\frg$.
Together with the assumption that $G$ acts almost effectively,
this means $\frh$ is trivial.
We have the following consequence:

\begin{cor}\label{cor:no_compact}
If $K=\one$, then the stabilizer $H$ is a lattice
in $G$.
\end{cor}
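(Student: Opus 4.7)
The plan is to combine Theorem \ref{thm:invariance} with the almost effective action hypothesis to show that the stabilizer algebra $\frh$ is trivial, which forces $H$ to be discrete; finite volume of $M$ then upgrades this to the lattice property.

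First I would note that under the assumption $K=\one$, the Levi decomposition simplifies to $\frg=\frs\ltimes\frr$. Part (1) of Theorem \ref{thm:invariance} therefore says that $\met$ is invariant under $\ad_{\frg}(\frs\ltimes\frr)=\ad_{\frg}(\frg)$, so $\met$ is an invariant symmetric bilinear form on the whole of $\frg$.

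Next I would exploit invariance to conclude that the metric radical $\frg^{\perp}$ is an ideal of $\frg$: for $x\in\frg^{\perp}$, $y\in\frg$ and any $z\in\frg$ we have $\langle[y,x],z\rangle=-\langle x,[y,z]\rangle=0$, so $[y,x]\in\frg^\perp$. As recalled just before the statement of the corollary, $\frh=\frg^{\perp}$; hence $\frh$ is an ideal of $\frg$. Since $G$ is connected and simply connected, $\frh$ integrates to a connected normal subgroup $H^\circ\trianglelefteq G$ contained in $H$. Almost effectiveness of the $G$-action on $M=G/H$ means precisely that $H$ contains no non-trivial connected normal subgroup of $G$, forcing $H^\circ=\one$, and hence $\frh=\zsp$. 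In particular $H$ is discrete.

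Finally, I would invoke the finite-volume hypothesis on $M$. Since $G$ acts by isometries on $M$, the pseudo-Riemannian volume form descends to a $G$-invariant finite measure on $M=G/H$. A discrete subgroup $H$ of a connected Lie group $G$ for which $G/H$ carries a $G$-invariant finite measure is by definition a lattice, and this finishes the proof. The argument is essentially a direct assembly of the tools developed earlier in the excerpt; no step is really an obstacle, the only subtlety being to note that invariance of $\met$ (not merely nil-invariance) is what makes $\frh$ an ideal, which is exactly where the hypothesis $K=\one$ enters via Theorem \ref{thm:invariance}(1).
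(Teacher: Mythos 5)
Your argument is correct and follows essentially the same route as the paper, which establishes the corollary in the paragraph preceding its statement: with $K=\one$ Theorem \ref{thm:invariance} makes $\met$ invariant, so $\frh=\frg^\perp$ is an ideal, almost effectiveness forces $\frh=\zsp$, and the finite $G$-invariant volume then makes the discrete stabilizer $H$ a lattice. No substantive difference.
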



\begin{cor}\label{cor:alternative1}
Let $M=G/H$ be a pseudo-Riemannian Einstein manifold of finite
volume, with a connected Lie group $G$ acting isometrically and
almost effectively.
If a Levi subgroup of $G$ has no compact factors, then:
\begin{enumerate}
\item
$H$ is a lattice in $G$.
\item
The pseudo-Riemannian Einstein metric on $M$ pulls back to a
bi-invariant
pseudo-Riemannian Einstein metric on $G$.
\item
$G$ is either solvable or semisimple without compact factors.
\end{enumerate}
\end{cor}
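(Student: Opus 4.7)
The corollary is essentially an assembly of results already in place. At the Lie algebra level, the hypothesis that a Levi subgroup of $G$ has no compact factors is equivalent to $\frk = \mathbf{0}$ in the Levi decomposition $\frg = (\frk \oplus \frs) \ltimes \frr$, so that $\frg = \frs \ltimes \frr$; in the simply connected cover this reads $K = \{e\}$.

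For (1), I would apply Corollary \ref{cor:no_compact} directly: with $K = \{e\}$, the stabilizer $H$ is a lattice in $G$. In particular, $H$ is discrete, so $\frh = \mathbf{0}$, and since $\frg^\perp = \frh$, the induced form $\met$ on $\frg$ is non-degenerate, i.e., a genuine scalar product.

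For (2), since $H$ is discrete, the projection $\pi\colon G \to G/H = M$ is a covering map, and the pulled-back bilinear tensor $\g_G := \pi^* \g_M$ is a bona fide pseudo-Riemannian metric on $G$ (non-degeneracy follows from non-degeneracy of $\met$) which is Einstein because the Einstein condition is local. Left-invariance of $\g_G$ is immediate from the fact that the isometric $G$-action on $M$ intertwines with left multiplication on $G$ via $\pi \circ L_g = \lambda_g \circ \pi$. For right-invariance, I would invoke Theorem \ref{thm:invariance}(1): because $\frg = \frs \ltimes \frr$, the nil-invariant form $\met$ is actually $\ad_\frg(\frg)$-invariant, and since $G$ is connected this lifts to $\Ad(G)$-invariance, which is exactly right-invariance of $\g_G$. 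Hence $\g_G$ is bi-invariant.

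For (3), Proposition \ref{prop:Einstein_solvable} applied to the bi-invariant Einstein metric $\g_G$ on $G$ yields the dichotomy: either $G$ is semisimple, in which case the Levi-no-compact-factors hypothesis forces $G = S$, so $G$ is semisimple without compact factors; or $G$ is solvable. No step presents a genuine obstacle; the whole argument is a direct repackaging of prior results. The one point that rewards care is that Theorem \ref{thm:invariance}(1) gives full $\ad_\frg(\frg)$-invariance (rather than merely $\ad_\frg(\frs \ltimes \frr)$-invariance) precisely because the compact semisimple summand $\frk$ is absent — which is exactly where the hypothesis on the Levi subgroup is used in both (2) and (3).
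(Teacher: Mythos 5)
Your proposal is correct and follows exactly the paper's route: Corollary \ref{cor:no_compact} (via Theorem \ref{thm:invariance}) for (1) and (2), then Proposition \ref{prop:Einstein_solvable} for (3). The paper's own proof is a two-line citation of these same results; your expansion of the details (covering map, full $\ad_{\frg}(\frg)$-invariance because $\frk=\zsp$) is accurate and fills in precisely what the paper leaves implicit.
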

\begin{proof}
(1) and (2) follow from Theorem \ref{thm:invariance}
and Corollary \ref{cor:no_compact}.
Then (3) follows from Proposition \ref{prop:Einstein_solvable}.
\end{proof}


Suppose now that $G$ is solvable.
The metric radical of $\kappa$ contains the nilradical of the Lie algebra
$\frg$ of $G$.
So if $G$ is nilpotent, then $\kappa=0$.
However, as Example \ref{ex:kappa=0} below shows,
there are $\frg$ which are solvable but not nilpotent
and also have $\kappa=0$.

\begin{example}\label{ex:kappa=0}
Let $\{b,x_1,x_2,y\}$ be a Witt basis of $\ab^4_1$ with isotropic
vectors $b,y$, $\langle b,y\rangle=1$, $\langle x_i,x_j\rangle=\delta_{ij}$ and $x_i\perp\Span\{b,y\}$. 

In this basis, define a semisimple skew derivation by
\[
\delta_a
=
\begin{pmatrix}
1 & 0 & 0 & 0 \\
0 & 0 & -1 & 0 \\
0 & 1 & 0 & 0 \\
0 & 0 & 0 & -1
\end{pmatrix}.
\]
Double extension of $\ab_1^4$ by $\Span\{a\}$, where $a$ acts on
$\ab_1^4$ by $\delta_a$, yields a six-dimensional metric
Lie algebra $\frg$ with invariant scalar product of index
$2$.
The generators of $\frg$ are $a,b,x_1,x_2,y,z$, where $z$ spans the
center of $\frg$.

The Killing form $\kappa$ of $\frg$ is zero, since
\[
\tr(\ad(a)^2)=\tr(\delta_a^2)=0,
\]
and all other adjoint operators are nilpotent.
A Lie group $G$ with Lie algebra $\frg$ has a bi-invariant Einstein
metric given by left-invariant continuation of $\met$.
However, we will see in Theorem \ref{thm:no_einstein_dim6} that no
such $G$ admits a lattice, so this example does not give rise
to a compact homogeneous Einstein manifold.
\end{example}

\section{Solvable isometry groups}
\label{sec:solvable}

Let $M$ be a compact pseudo-Riemannian Einstein solvmanifold,
that is, it has a transitive action by a connected
solvable Lie group $G$ of isometries.
It was shown by Baues and Globke \cite[Theorems A, B]{BG} that
$M=G/\Gamma$, where $\Gamma$ is a lattice in $G$ and the metric
$\g_M$ on $M$ pulls back to a bi-invariant pseudo-Riemannian
metric $\g$ on $G$.
Moreover, $G=\Iso(M)^\circ$ by \cite[Corollary D]{BG}.

Due to the local freeness of the $G$-action,
$\g$ is an Einstein metric on $G$.
By Proposition \ref{prop:Einstein_solvable}, the Killing form
of $G$ is $\kappa=0$.

We determine conditions for a solvable Lie algebra $\frg$
to admit an invariant Einstein scalar product $\met$
(that is, to satisfy $\kappa=0$) and for $G$ to have a lattice
subgroup at the same time.

\subsection{Einstein scalar products}

Let $\frn$ denote the nilradical of $\frg$. There is a
vector space decomposition
\begin{equation}
\frg = \fra\oplus\frn
\label{eq:g=a+n}
\end{equation}
for some subspace $\fra$ of $\frg$ whose elements act by
non-nilpotent derivations on $\frn$.
For $x\in\frn$, the derivation $\ad(x)$ is nilpotent,
so $\kappa(x,y)=0$ for any $y\in\frg$ is automatically satisfied.
So the condition for $\met$ to be Einstein is
\[
\kappa(a,b)=\tr(\ad(a)\ad(b))=0
\quad\text{ for all } a,b\in\fra.
\]
By polarization it is enough to consider the corresponding
quadratic equations
\begin{equation}
\kappa(a,a)=\tr(\ad(a)^2)=0
\quad\text{ for all } a\in\fra.
\label{eq:kappa0}
\end{equation}
Given an element $a\in\fra$, let
\begin{equation}
\lambda_1,\ldots,\lambda_k\in\RR,\
\zeta_1,\ldots,\zeta_m,\bar{\zeta}_1,\ldots,\bar{\zeta}_m\in\CC\backslash\RR
\label{eq:eigenvals}
\end{equation}
denote the eigenvalues of $\ad(a)$, and set $\alpha_i=\Re(\zeta_i)$,
$\beta_i=\Im(\zeta_i)$ for each $i$.
Then condition \eqref{eq:kappa0} translates into
\begin{equation}
\lambda_1^2+\ldots+\lambda_k^2
+2\alpha_1^2+\ldots+2\alpha_m^2
-2\beta_1^2-\ldots-2\beta_m^2=0.
\label{eq:kappa_square}
\end{equation}
The second condition is that $\ad(x)$ is a skew-symmetric
with respect to $\met$,
\begin{equation}
\langle \ad(x)y,z\rangle = -\langle y,\ad(x)z\rangle
\label{eq:ad_skew}
\end{equation}
for all $x,y,z\in\frg$.
This means $\ad(\frg)$ is a solvable subalgebra of
$\sso(p,q)\cong\sso(\frg,\met)$ for certain $0\leq q\leq p\leq 0$,
and as such it is contained in a maximal solvable subalgebra
$\frb$ of $\sso(p,q)$.
The maximal solvable subalgebras of $\sso(p,q)$ were described
by Patera, Winternitz and Zassenhaus \cite{PWZ}
(assume $p\geq q>0$):
\begin{enumerate}
\item
The first possibility is that $\frb$ is a maximally compact
Cartan subalgebra.
Every element $X\in\frb$ can be represented as
\[
X = \sum_{i=1}^{\lfloor\frac{p}{2}\rfloor}\xi_i (E_{2i-1,2i}-E_{2i,2i-1})
+\sum_{j=1}^{\lfloor\frac{q}{2}\rfloor}\mu_j (E_{p+2j-1,p+2j}-E_{p+2j,p+2j-1})
\]
where $E_{i,j}$ denotes the matrix with entry $1$ in row $i$,
column $j$, and all other entries $0$.
A direct calculation shows that $\tr(X^2)<0$ whenever
$X\neq 0$.
By \eqref{eq:kappa0}, $\ad(\fra)$ cannot be contained in such an
algebra $\frb$.
\item
If $\frb$ is not maximally compact, then the elements
$X\in\frb$ can be simultaneously represented recursively as
follows:
\begin{equation}
X
=
\begin{pmatrix}
A & * & * \\
0 & X_1 & * \\
0 & 0 & -A^\top
\end{pmatrix},
\label{eq:PWZ}
\end{equation}
where
\begin{enumerate}
\item
either $A=\lambda\in\RR$ and $X_1\in\sso(p-1,q-1)$ (up to conjugation),
\item
or $A=\left(\begin{smallmatrix} \alpha & -\beta\\ \beta & \alpha\end{smallmatrix}\right)$
and $X_1\in\sso(p-2,q-2)$ (up to conjugation).
\end{enumerate}
In either case, $X_1$ is (conjugate to) an element of a maximal
solvable subalgebra of the respective lower-dimensional
pseudo-orthogonal algebra,
and thus has a re\-presentation of the same form \eqref{eq:PWZ}.
For $\tr(X^2)$ we obtain
\begin{equation}
\tr(X^2) = 2\tr(A)^2 + \tr(X_1^2)
=\left\{
\begin{array}{ll}
2\lambda^2 +\tr(X_1^2) & \text{in case (a)}, \\
4\alpha^2-4\beta^2 + \tr(X_1^2) & \text{in case (b)}.
\end{array}\right.
\label{eq:recursion}
\end{equation}
This can be iterated for $\tr(X_1^2)$ unless the block $X_1$
is contained in $\sso(r)$ for some $r\geq 0$.
Should this be the case, $X_1$ is (conjugate to) a block diagonal
matrix with eigenvalues in $\i \RR$.
\end{enumerate}

\begin{remark}
Iterating \eqref{eq:recursion} for $X_1$ yields again the form
\eqref{eq:kappa_square},
where the $\lambda_i^2$ that are contributions from the cases (a)
and the $\alpha_j^2,\beta_j^2$ that are contributions from the
cases (b) in the recursion appear with even multiplicity.
Only eigenvalues in $\i\RR$ coming from the block
$X_1\in\sso(r)$ in the last step of the recursion may appear with
odd multiplicity.
\end{remark}

{
\renewcommand{\thethm}{\ref{thm:einstein_dim}}
\begin{thm}
Let $(\frg,\met)$ be a solvable
metric Lie algebra with Einstein scalar product of Witt index $s$.
Let $\frn$ be the nilradical of $\frg$.
If $\frg$ is not nilpotent, then $\dim\frg\geq 6$,
$\dim\frn\geq 5$,
and $s\geq 2$.
Example \ref{ex:kappa=0} shows that these estimates are sharp.
\end{thm}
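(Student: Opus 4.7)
The plan is to read off all three bounds from the spectrum of $\ad(a)$ for a single carefully chosen $a\in\fra$. As $\frg$ is not nilpotent, $\fra\neq 0$, so I pick any non-zero $a\in\fra$; then $\ad(a)|_{\frn}$ is non-nilpotent by the defining property of $\fra$, and in particular $a\notin\frn$. By \cite[Theorem~1.2]{BG} the form $\met$ is invariant on the solvable $\frg$, so $\ad(a)$ is skew-symmetric with respect to $\met$. Writing the additive Jordan decomposition $\ad(a)=S+N$, the nilpotent part $N$ is skew by nil-invariance, whence $S=\ad(a)-N$ is skew as well, and the two commute. Since $SN$ is nilpotent, $\tr(SN)=\tr(N^2)=0$, so $\tr(\ad(a)^2)=\tr(S^2)$ and the Einstein condition \eqref{eq:kappa0} reduces to $\tr(S^2)=0$ with $S\neq 0$.

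The key step is a spectral analysis of $S$. Skew-symmetry forces the spectrum to be symmetric under $\mu\mapsto-\mu$, and the reality of $S$ forces invariance under $\mu\mapsto\bar\mu$, so non-zero eigenvalues arrange into real pairs $\{\pm\lambda\}$ (contribution $2\lambda^2>0$ to $\tr(S^2)$, supported on a hyperbolic $2$-plane of Witt index $1$), imaginary pairs $\{\pm i\beta\}$ (contribution $-2\beta^2<0$, on a definite $2$-plane), and complex quadruples $\{\pm\alpha\pm i\beta\}$ with $\alpha,\beta\neq 0$ (contribution $4(\alpha^2-\beta^2)$, on a neutral $4$-space of Witt index $2$); each signature claim is a short commutant calculation for the corresponding Jordan block. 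I expect this to be the main technical obstacle, namely verifying that no configuration of dimension less than $4$ balances $\tr(S^2)=0$ with $S\neq 0$. A purely real or purely imaginary spectrum forces $S=0$, so the minimal admissible configurations are either one complex quadruple with $\alpha^2=\beta^2$ (dimension $4$, Witt index $2$) or one real pair $\{\pm\lambda\}$ combined with one imaginary pair $\{\pm i\lambda\}$ (dimension $4$, Witt index $1$). In both cases $\dim V_{\neq 0}\geq 4$ and the Witt index of $V_{\neq 0}$ is at least $1$.

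Two containments then finish the argument. On the non-zero side, $\ad(a)|_{V_\mu}$ is invertible for $\mu\neq 0$, so $V_{\neq 0}\subset\im(\ad(a))\subset[\frg,\frg]\subset\frn$. On the zero side, $\frg$ is non-abelian (since non-nilpotent), so Proposition \ref{prop:zgzn} supplies a non-zero totally isotropic central ideal $\frj\subset\frj_0$; centrality places $\frj$ inside both $V_0$ and $\frn$. Since $V_0$ and $V_{\neq 0}$ are $\met$-orthogonal (they are eigenspaces of the skew operator $S$ for eigenvalues whose sum does not vanish), the direct sum $V_{\neq 0}\oplus\frj$ embeds in $\frn$, yielding $\dim\frn\geq 4+1=5$; combined with $a\notin\frn$ this gives $\dim\frg\geq 6$. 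Finally, a maximal totally isotropic subspace of $V_{\neq 0}$ (dimension at least $1$) together with $\frj$ forms a totally isotropic subspace of $\frg$ of dimension at least $2$, so $s\geq 2$. Sharpness of all three bounds is exhibited by Example \ref{ex:kappa=0}.
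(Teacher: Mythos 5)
Your proof is correct and follows essentially the same route as the paper's: both decompose $\frg$ along the semisimple part of $\ad(a)$, deduce from the trace condition that the image $W_1$ of that semisimple part has dimension at least $4$ and contains an isotropic line, and then adjoin the central totally isotropic ideal supplied by Proposition \ref{prop:zgzn}, which lies in the kernel and is orthogonal to $W_1$. The only cosmetic difference is that you obtain the eigenvalue symmetries and the signatures of the eigenvalue blocks directly from skew-symmetry and reality of the semisimple part, where the paper invokes the Patera--Winternitz--Zassenhaus normal form \eqref{eq:PWZ}.
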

\addtocounter{thm}{-1}
}
\begin{proof}
As $\frg$ is not nilpotent, there exists $a\in\frg\backslash\frn$
that acts on $\frg$ with non-vanishing semisimple part
$\sigma_a=\ad(a)_{\rm ss}$ of the Jordan decomposition of
$\ad(a)$.
Set $W_1=\im\sigma_a$ and $W_0=\ker\sigma_a$.
Then $\frg=W_1\oplus W_0$,
$W_0\perp W_1$ by invariance of $\met$,
and $\dim W_0>0$ since $\sigma_a(a)=0$.
Note that $W_1\subset[\frg,\frg]\subset\frn$ since
$\im\sigma_a\subseteq\im\ad(a)$.
Moreover, $\dim W_1$ is even since all non-zero eigenvalues of
$\ad(a)$ come in pairs of either complex conjugates or
positives and negatives.
We proceed in several steps:

First, we show that $\dim W_1\geq 4$.
By \eqref{eq:kappa_square}, in order to satisfy
\[
\tr(\ad(a)^2)=\tr(\sigma_a^2)=\tr((\sigma_a|_{W_1})^2)=0,
\]
we need at least one
non-real eigenvalue $\zeta=\alpha+\i\beta$ of $\sigma_a|_{W_1}$.
Use the representation \eqref{eq:PWZ} to deduce the following:
\begin{itemize}
\item
If $\alpha\neq0$, then $-\zeta$ is also an
eigenvalue of $\ad(a)$.
But then we have four distinct eigenvalues
$\zeta,\bar{\zeta},-\zeta,-\bar{\zeta}$.
This is only possible if $\dim W_1\geq 4$.
\item
If $\alpha=0$, then either
\[
2\lambda^2-2\beta^2 = 0
\]
for an eigenvalue $\lambda\in\RR$ whose negative $-\lambda$ must
also be an eigenvalue,
or
\[
2\alpha_1^2-2\beta_1^2-2\beta^2 = 0
\]
for an eigenvalue $\alpha_1+\i\beta_1\in\CC\backslash(\RR\cup\i\RR)$
(the existence of more eigenvalues already requires
$\dim W_1>4$).
In both cases, we have at least four distinct eigenvalues,
so $\dim W_1\geq 4$.
\end{itemize}
If $W_1$ is a definite subspace, then $\sigma_a|_{W_1}\in\sso(4)$,
which contradicts any of the eigenvalue combinations above.
So $W_1$ contains a totally isotropic line $L$.

Next, we show that the index of $\met$ is at least $2$.
The totally isotropic ideal $\frj_0=\zen(\frn)\cap[\frg,\frn]$
from \eqref{eq:j} is non-zero
since $\frg$ is not abelian (Proposition \ref{prop:j0}).
There exists a non-trivial ideal $\fri\subset\frj_0\cap\zen(\frg)$ (Proposition \ref{prop:zgzn}),
so that $\fri\subset W_0$.
Then also $\fri\perp[\frg,\frg]$ due to invariance of $\met$,
so in particular $\fri\perp W_1$.
Therefore, $\fri\oplus L$ is a totally isotropic subspace of
dimension $\geq 2$ in $\frg$, which means that the index of $\met$
is $\geq 2$.

We show that $\dim\frg\geq 6$.
From the previous steps we know that $\dim W_1\geq 4$ and
$\dim\fri\geq 1$. Hence
\[
\dim\frn\ \geq\ \dim W_1+\dim\fri\ \geq\ 4+1=5. 
\]
Therefore, since $a\nin\frn$,
\[
\dim\frg\ \geq\ \dim\RR a + \dim\frn
\ \geq\ 1+5=6.
\]
This concludes the proof of the theorem.
\end{proof}

\begin{cor}\label{cor:Lorentz_Einstein}
Every Lorentzian Einstein metric Lie algebra $(\frg,\met)$ is abelian.
\end{cor}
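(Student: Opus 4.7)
The plan is straightforward given Theorem \ref{thm:einstein_dim}: since a Lorentzian scalar product has Witt index $s=1$, the theorem forces $\frg$ to be nilpotent, as otherwise $s\ge 2$ would be required. The task then reduces to showing that any nilpotent Lorentzian metric Lie algebra is abelian.

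Suppose for contradiction that $\frg$ is non-abelian and nilpotent. By Propositions \ref{prop:j0} and \ref{prop:zgzn}, the ideal $\frj_0 = \zen(\frg) \cap [\frg,\frg]$ is non-zero and contains a non-trivial totally isotropic central ideal $\fri$, which must be $1$-dimensional because the Witt index is $1$. The reduction $\bar\frg = \fri^\perp/\fri$ then carries a definite scalar product of signature $(n-2,0)$ and is still nilpotent. But invariance on a definite space forces each $\ad_{\bar\frg}(\bar x)$ to be skew-symmetric with purely imaginary eigenvalues, while nilpotency forces all eigenvalues to be $0$; hence $\ad_{\bar\frg} \equiv 0$, that is, $\bar\frg$ is abelian.

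Fix a Witt decomposition $\frg = \fra \oplus \frw \oplus \fri$ with $\fra = \RR a$. The induced derivation $\delta_a$ on $\bar\frg$ is again nilpotent and skew-symmetric on the definite space, so $\delta_a = 0$; and Lemma \ref{lem:An1} (applicable because $\fra$ is abelian) gives $\xi_a = 0$. Hence $[a,\fri^\perp]=0$, and since $\fri$ is central in $\frg$, this places $a$ in $\zen(\frg)$. Now Proposition \ref{prop:A_and_O} gives
\[
\langle \omega(\bar x, \bar y), a \rangle = \langle [a,x], y \rangle = 0
\qquad \text{for all } x,y \in \fri^\perp,
\]
and the non-degenerate pairing between $\fra$ and $\fri$ forces the cocycle $\omega$ to vanish. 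Combined with the fact that $\bar\frg$ is abelian, equation \eqref{eq:defomega} then yields $[x,y]=0$ for all $x,y \in \fri^\perp$; together with the centrality of $a$ this forces $\frg$ to be abelian, a contradiction.

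The delicate point is that showing the reduction $\bar\frg$ is abelian is not enough on its own: one must also rule out a non-trivial central extension, which is why the argument has to kill both the induced derivation $\delta_a$ and the extension cocycle $\omega$. Everything else is a direct invocation of the reduction machinery of Section \ref{sec:reduction}.
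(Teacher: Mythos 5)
Your proof is correct, and the first step coincides exactly with the paper's: Theorem \ref{thm:einstein_dim} forces $s\geq 2$ for non-nilpotent $\frg$, so a Lorentzian ($s=1$) Einstein scalar product only lives on a nilpotent algebra. Where you diverge is the second step. The paper disposes of the nilpotent case in one line by citing Medina's classification of Lie algebras with invariant Lorentzian scalar products \cite{medina}, whereas you prove the required special case (nilpotent Lorentzian metric Lie algebra $\Rightarrow$ abelian) from scratch using the reduction machinery of Section \ref{sec:reduction}: a non-abelian nilpotent $\frg$ admits a one-dimensional totally isotropic central ideal $\fri$ by Propositions \ref{prop:j0} and \ref{prop:zgzn}, the reduction $\fri^\perp/\fri$ is definite and nilpotent hence abelian (skew-symmetric plus nilpotent implies zero on a definite space), and then you correctly kill both the induced derivation $\delta_a$ (same semisimple-versus-nilpotent argument) and, via Proposition \ref{prop:A_and_O} and the non-degenerate pairing of $\fra$ with $\fri$, the extension cocycle $\omega$, so that \eqref{eq:defomega} forces $\frg$ abelian. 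Your closing remark is well taken: abelianness of the reduction alone does not suffice, and the $\omega=0$ step is the one that actually rules out Heisenberg-type central extensions. What your route buys is self-containedness --- it avoids importing Medina's full classification and stays entirely within the tools already set up in Section \ref{sec:reduction} --- at the cost of roughly a page versus the paper's one-line citation; it also does not use the Einstein hypothesis beyond the initial appeal to Theorem \ref{thm:einstein_dim}, just as the paper's citation does not.
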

\begin{proof}
By Theorem \ref{thm:einstein_dim}, $\frg$ is nilpotent.
A nilpotent Lorentzian metric Lie algebra is abelian
(Medina \cite{medina}).
\end{proof}

Clearly, for any given dimension $n\geq 6$,
equation \eqref{eq:kappa_square} has many different
solutions which will lead to non-isometric
Lie algebras.

A classification of metric Lie algebras with Witt index $2$
was given by Kath and Olbrich \cite{KO1}. They identified three
classes of solvable but non-abelian indecomposable metric Lie algebras with
index $2$ (Examples 6.1 to 6.3 in \cite{KO1}).
Two of these classes (Examples 6.2 and 6.3 in \cite{KO1}) have
only purely imaginary eigenvalues and thus cannot satisfy
the Einstein condition \eqref{eq:kappa_square}.
The remaining class provides the only candidate for non-abelian
Einstein Lie algebras with index $2$.
It generalizes Example \ref{ex:kappa=0}.

\begin{example}\label{ex:KO1}
Consider the vector space $\frg=\fra\oplus\ab^{n-1}_{s-1}\oplus\frj$
with $\fra=\Span\{a\}$, $\frj=\Span\{z\}$, and extend the
pseudo-Riemannian scalar product on $\ab^{n-1}_{s-1}$ to $\frg$ by
\[
\langle a,a\rangle = \langle z,z\rangle =0,
\quad
\langle a,z\rangle = 1,
\quad
(\fra\oplus\frj)\perp\ab^{n-1}_{s-1}.
\]
Define a Lie product on $\frg$ by
\[
[a,x] = \delta_a(x),
\quad
[x,y]=\langle[a,x],y\rangle z
\]
for some $\delta_a\in\sso_{n-s-1,s-1}$, and $x,y\in\ab^{n-1}_{s-1}$,
all other relations zero.
Then $\met$ is an invariant scalar product of signature
$(n-s,s)$ on $\frg$.
The scalar product $\met$ is Einstein if and only if $\delta_a$
satisfies \eqref{eq:kappa_square}.
The Lie algebra $\frg$ is solvable, and it is nilpotent if
and only if $\delta_a$ is nilpotent.
For $s=2$, this is the only class of Lie algebras with invariant
Einstein scalar products.
\end{example}

%
%
%
%

\subsection{An algebraic lemma}

We prove an algebraic result to help us with the existence
question of lattices in low dimensions.

We first recall a famous result from the theory of transcendental
numbers that solved Hilbert's Seventh Problem
(see Waldschmidt \cite{waldschmidt} for reference).

\begin{gelfondschneider}
Let $\alpha\in\CC\backslash\{0,1\}$ and let $\beta\in\CC$
be irrational.
Then at least one of the numbers $\alpha$, $\beta$ and $\alpha^\beta$
is transcendental.
\end{gelfondschneider}

\begin{lem}\label{lem:algebraic}
Let $X$ be a matrix of the form \eqref{eq:PWZ}
with eigenvalues
$\lambda_1,\ldots,\lambda_k\in\RR$ and
$\zeta_1,\ldots,\zeta_m,\bar{\zeta}_1,\ldots,\bar{\zeta}_m\in\CC\backslash\RR$.
Suppose $X$ is not nilpotent and the eigenvalues satisfy
\begin{equation}
\lambda_1^2 + \ldots + \lambda_k^2
+2\Re(\zeta_1)^2+\ldots+2\Re(\zeta_m)^2
-2\Im(\zeta_1)^2-\ldots-2\Im(\zeta_m)^2
=0.
\label{eq:eigenvalues}
\end{equation}
If $n\leq 5$, then there is no $t\in\RR$ such that
$\exp(tX)$ is conjugate to a matrix in $\SL(n,\QQ)$.
\end{lem}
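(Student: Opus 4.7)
The plan is to exhaust all non-nilpotent $X$ of the form \eqref{eq:PWZ} in dimension $n\leq 5$ that satisfy \eqref{eq:eigenvalues}, and to rule out each surviving configuration by applying the Gelfond-Schneider Theorem to suitable eigenvalues of $\exp(tX)$.

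\textbf{Step 1 (Spectrum analysis).} Iterating the PWZ block decomposition \eqref{eq:PWZ}, the spectrum of $X$ splits into pairs $\{\mu,-\mu\}$ from each $1\times 1$ real $A$-block, quadruples $\{\pm\alpha\pm\i\beta\}$ from each $2\times 2$ complex $A$-block, and purely imaginary pairs $\{\pm\i\gamma\}$ (possibly together with a single $0$) coming from the terminal block $X_1\in\sso(r)$. For $n\leq 3$, equation \eqref{eq:eigenvalues} forces each such contribution to vanish, so $X$ is nilpotent and there is nothing to prove. For $n=4,5$ a short case-by-case analysis shows that, up to a possible extra $0$ when $n=5$, the only non-nilpotent configurations compatible with \eqref{eq:eigenvalues} are:
\begin{enumerate}
\item[(A)] spectrum $\{\lambda,-\lambda,\i\beta,-\i\beta\}$ with $\lambda^2=\beta^2\neq 0$;
\item[(B)] spectrum $\{\alpha+\i\beta,\alpha-\i\beta,-\alpha+\i\beta,-\alpha-\i\beta\}$ with $\alpha^2=\beta^2\neq 0$.
\end{enumerate}
After replacing $\beta$ by $-\beta$ if necessary we may assume $\beta=\lambda$ in case (A) and $\beta=\alpha$ in case (B).

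\textbf{Step 2 (Algebraicity of the eigenvalues of $\exp(tX)$).} Suppose for contradiction that $\exp(tX)$ is conjugate to some $M\in\SL(n,\ZZ)$ for a $t\in\RR\backslash\{0\}$. Then $\exp(tX)$ has the same characteristic polynomial as $M$, which is monic with integer coefficients, so every eigenvalue of $\exp(tX)$ is an algebraic integer, and in particular algebraic.

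\textbf{Step 3 (Gelfond-Schneider).} Set $u=\exp(t\lambda)$ in case (A) and $u=\exp(t\alpha)$ in case (B). In either case $u$ is a positive real number different from $0$ and $1$, because $t\neq 0$ and $\lambda$, respectively $\alpha$, is non-zero. In case (A), the numbers $u=\exp(t\lambda)$ and $u^{\i}=\exp(\i t\lambda)$ (principal branch) are both eigenvalues of $\exp(tX)$, hence algebraic by Step 2. Since $\i$ is algebraic and $\i\notin\QQ$, the Gelfond-Schneider Theorem applied to $u$ and $\i$ forces $u^{\i}$ to be transcendental, a contradiction. In case (B), the eigenvalues $u^{1+\i}=\exp(t\alpha(1+\i))$ and $u^{1-\i}=\exp(t\alpha(1-\i))$ are algebraic, so their product $u^{1+\i}\cdot u^{1-\i}=u^{2}$ is algebraic and hence so is $u$. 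Gelfond-Schneider applied to $u$ and $1+\i$ (algebraic and irrational) then forces $u^{1+\i}$ to be transcendental, again contradicting Step 2.

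The genuinely delicate part is the enumeration in Step 1: one must verify that no iteration of the PWZ reduction yields a non-nilpotent spectrum satisfying \eqref{eq:eigenvalues} for $n\leq 5$ other than those listed in (A) and (B). Once the spectral picture is pinned down, Steps 2 and 3 are essentially mechanical applications of Gelfond-Schneider.
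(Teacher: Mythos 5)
Your proposal is correct and follows essentially the same route as the paper: reduce to the two possible non-nilpotent spectra in dimensions $4$ and $5$ (your cases (A) and (B) are the paper's cases (2) and (1)), observe that eigenvalues of an integer matrix are algebraic, and derive a contradiction from the Gelfond--Schneider Theorem. The only differences are cosmetic --- the paper phrases the final step as ``$\xi^{\i}$ is again an eigenvalue for every eigenvalue $\xi$'' and applies Gelfond--Schneider with exponent $\i$ throughout, whereas in case (B) you first extract algebraicity of $u=\e^{t\alpha}$ from a product of eigenvalues and use the exponent $1+\i$; you also make explicit the (needed) restriction $t\neq 0$.
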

\begin{proof}
For the eigenvalues to satisfy the given equation, a non-nilpotent
$X$ must have at least one non-real eigenvalue pair $\zeta$,
$\bar{\zeta}$.
If $n\leq 5$, this is only possible for $n=4$ and $n=5$.
We distinguish two cases:
\begin{enumerate}
\item
If $\Re(\zeta)\neq0$, then the form of $X$ implies
that $-\zeta$ is another eigenvalue.
So the non-zero eigenvalues of $X$ are
$\zeta,\bar{\zeta},-\zeta,-\bar{\zeta}$.
The condition \eqref{eq:eigenvalues} becomes
$\Re(\zeta)^2=\Im(\zeta)^2$, so if $\alpha=\Re(\zeta)$,
then the eigenvalues of $\exp(X)$ other than $1$ are
\[
\e^{\alpha(1+\i)},\e^{\alpha(1-\i)},\e^{\alpha(-1+\i)},\e^{\alpha(-1-\i)}.
\]
\item
Suppose $\Re(\zeta)=0$. Then we either have an additional real
eigenvalue $\lambda$ (hence also $-\lambda$) or two eigenvalues in
$\zeta,\zeta'\in\i\RR$ (otherwise we would be in case (1)).
In the latter case, condition \eqref{eq:eigenvalues}
cannot be satisfied. So we may assume that there exists an
eigenvalue $\lambda\in\RR$. By \eqref{eq:eigenvalues},
$\lambda^2=\zeta^2$.
So the eigenvalues of $\exp(X)$ other than $1$ are
\[
\e^{\lambda}, \e^{-\lambda}, \e^{\i\lambda}, \e^{-\i\lambda}.
\]
\end{enumerate}
Note that in both cases (1) and (2) for every eigenvalue $\xi$ of
$\exp(X)$, the number $\xi^\i$ is also an eigenvalue.

Assume there is some $t\in\RR$ such that $\exp(tX)$ is
conjugate to an matrix in $\SL(n,\QQ)$.
As zeros of the characteristic polynomial of a matrix in
$\SL(n,\QQ)$, every eigenvalue $\xi^t$ of $\exp(tX))$ is
algebraic. However, if $\xi^t$ is algebraic, then,
since $\i$ is algebraic and irrational,
the Gelfond-Schneider Theorem
implies that $(\xi^t)^\i=(\xi^{\i})^t$ is transcendental.
But as noted above, this number is also an eigenvalue of
$\exp(tX)$ and hence algebraic, a contradiction.
\end{proof}

It seems the argument using the Gelfond-Schneider Theorem
cannot prove the statement for dimension higher than $5$.

\begin{example}
Consider the following matrix $X\in\sso(5,1)$ of the form \eqref{eq:PWZ},
\[
X
=
\begin{pmatrix}
\lambda & 0& & & & *  \\
0&-\lambda & & &  \\
&& 0& -\beta_1&   & \\
&& \beta_1&0 & &   \\
& && & 0&-\beta_2  \\
0& && & \beta_2&0  
\end{pmatrix}.
\]
Condition \eqref{eq:eigenvalues} yields
$\lambda=\pm\sqrt{\beta_1^2+\beta_2^2}$, and so the eigenvalues
of $\exp(X)$ are
\[
\e^{\i\beta_1},\ \e^{-\i\beta_1},\ \e^{\i\beta_2},\
\e^{-\i\beta_2},\ \e^{\sqrt{\beta_1^2+\beta_2^2}},\
\e^{-\sqrt{\beta_1^2+\beta_2^2}}.
\]
It is not clear how these relations between the eigenvalues
could give a contradiction to their algebraicity.
\end{example}

However, a generalization of Lemma \ref{lem:algebraic} to any
$n\in\NN$ would hold if a famous conjecture on
transcendental numbers turned out to be true.

\begin{schanuel}
Let $\alpha_1,\ldots,\alpha_d$ be complex numbers that are linearly
independent over $\QQ$.
Then the transcendence degree over $\QQ$ of the extension field
$\QQ(\alpha_1,\ldots,\alpha_d,\e^{\alpha_1},\ldots,\e^{\alpha_d})$
is at least $d$.
\end{schanuel}

In fact, let $\{\alpha_1,\ldots,\alpha_d\}$ is a subset of
eigenvalues of $X$ that is maximally
linearly independent over $\QQ$.
We can write $\Re(\zeta_i)=\frac{1}{2}(\zeta_i+\bar{\zeta}_i)$
and $\Im(\zeta_i)=\frac{\i}{2}(\bar{\zeta}_i-\zeta_i)$
in \eqref{eq:eigenvalues}.
Since for every eigenvalue $\zeta\in\CC\backslash\RR$,
the conjugate $\bar{\zeta}$ is also an eigenvalue, this shows that
the eigenvalues of $X$ satisfy an algebraic relation over $\QQ$.
Replacing then every eigenvalue in \eqref{eq:eigenvalues} by a
linear combination of the $\alpha_1,\ldots,\alpha_d$ shows
that $\alpha_1,\ldots,\alpha_d$ satisfy an algebraic relation
over $\QQ$, and therefore do not form an algebraically
independent set.
If Schanuel's Conjecture is true, then at least one of the
exponentials $\e^{\alpha_1},\ldots,\e^{\alpha_d}$ is
transcendental. Therefore:

\begin{observation}\label{obs:schanuel}
If Schanuel's Conjecture is true, then Lemma \ref{lem:algebraic}
holds without the assumption that $n\leq 5$.
\end{observation}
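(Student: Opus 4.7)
The plan is to follow the same blueprint as the proof of Lemma \ref{lem:algebraic}, replacing the appeal to Gelfond--Schneider by Schanuel's Conjecture, and running the whole argument at the level of the eigenvalues of $tX$ (rather than $X$), so that algebraicity of the exponentials we need becomes automatic. Assume for contradiction that there exists $t\in\RR$, necessarily $t\neq 0$, such that $\exp(tX)$ is conjugate to some matrix in $\SL(n,\ZZ)$. Then every eigenvalue of $\exp(tX)$ is algebraic over $\QQ$, and since those eigenvalues are precisely the numbers $\exp(t\mu)$ as $\mu$ runs over the eigenvalues of $X$, each $\exp(t\mu)$ is algebraic.

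I first rewrite condition \eqref{eq:eigenvalues} as a polynomial identity with rational coefficients. Using $2\Re(\zeta)^2-2\Im(\zeta)^2=\zeta^2+\bar\zeta^2$, equation \eqref{eq:eigenvalues} becomes
\[
\sum_{i=1}^{k}\lambda_i^2+\sum_{j=1}^{m}(\zeta_j^2+\bar\zeta_j^2)=0.
\]
Let $\beta_1,\ldots,\beta_n$ denote the eigenvalues of $tX$ (that is, the $t\lambda_i$ together with the $t\zeta_j$ and $t\bar\zeta_j$); multiplying the displayed identity by $t^2$ yields
\[
\sum_{k=1}^{n}\beta_k^2=0.
\]

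Now choose a maximal $\QQ$-linearly independent subset $\alpha_1,\ldots,\alpha_d$ of $\{\beta_1,\ldots,\beta_n\}$. Since $X$ is not nilpotent and $t\neq 0$, at least one $\beta_k$ is nonzero, so $d\geq 1$. Write $\beta_k=\sum_{j}c_{kj}\alpha_j$ with $c_{kj}\in\QQ$ and substitute into the relation above to obtain
\[
\sum_{j,l}\Bigl(\sum_{k}c_{kj}c_{kl}\Bigr)\alpha_j\alpha_l=0,
\]
a polynomial relation on $\alpha_1,\ldots,\alpha_d$ over $\QQ$ whose coefficient matrix is $C^{\!\top}\!C$ for the $n\times d$ matrix $C=(c_{kj})$. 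Since the $\alpha_j$ themselves appear among the $\beta_k$, the matrix $C$ contains the $d\times d$ identity after reordering rows, hence has rank $d$, so $C^{\!\top}\!C$ is positive definite and in particular nonzero. Therefore $\alpha_1,\ldots,\alpha_d$ are algebraically dependent over $\QQ$, and the transcendence degree of $\QQ(\alpha_1,\ldots,\alpha_d)$ over $\QQ$ is at most $d-1$. Each $\exp(\alpha_j)$ equals one of the algebraic numbers $\exp(\beta_k)$, so adjoining the $\exp(\alpha_j)$ cannot raise the transcendence degree, and we obtain
\[
\operatorname{tr.deg}_{\QQ}\QQ\bigl(\alpha_1,\ldots,\alpha_d,\e^{\alpha_1},\ldots,\e^{\alpha_d}\bigr)\leq d-1 < d,
\]
contradicting Schanuel's Conjecture applied to the $\QQ$-linearly independent numbers $\alpha_1,\ldots,\alpha_d$.

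The step that requires care is the \emph{nontriviality} of the substituted polynomial relation; positive-definiteness of $C^{\!\top}\!C$ is what makes the argument work, and this rests on selecting the $\alpha_j$ directly from the $\beta_k$ so that $C$ has full column rank. The only remaining nuisance is excluding $t=0$, for which $\exp(tX)=I\in\SL(n,\ZZ)$ trivially, so this case should be read as implicitly excluded from the statement.
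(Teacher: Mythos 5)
Your proof is correct and follows essentially the same route as the paper: choose a maximal $\QQ$-linearly independent subset $\alpha_1,\ldots,\alpha_d$ of the eigenvalues, observe that the Einstein condition \eqref{eq:eigenvalues} is the quadratic relation $\sum_k\beta_k^2=0$ over $\QQ$ and hence witnesses algebraic dependence of the $\alpha_j$, and invoke Schanuel's Conjecture to contradict the algebraicity of the eigenvalues of $\exp(tX)$. You are somewhat more careful than the paper in two places it leaves implicit --- scaling by $t$ so that the exponentials $\e^{\alpha_j}$ really are among the (algebraic) eigenvalues of $\exp(tX)$, and verifying via the full column rank of $C$ that the substituted quadratic relation is not the zero polynomial --- but the underlying argument is the same.
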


\subsection{Compact Einstein solvmanifolds}

Now we investigate which solvable Lie groups $G$ can give rise
to compact Einstein solvmanifolds $M=G/\Gamma$.
A necessary condition is the existence of an invariant
Einstein scalar product on the Lie algebra $\frg$ of $G$.
We first observe a direct consequence of Corollary
\ref{cor:Lorentz_Einstein}:

\begin{cor}
A compact homogeneous Lorentzian Einstein solvmanifold is flat.
\end{cor}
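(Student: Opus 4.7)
The plan is to chain together the structural results already established in this section with Corollary \ref{cor:Lorentz_Einstein}, and then invoke the standard fact that bi-invariant metrics on abelian Lie groups are flat.

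First, I would unpack the hypotheses using the setup at the start of Section \ref{sec:solvable}. Since $M$ is a compact homogeneous Lorentzian Einstein solvmanifold, the Baues--Globke results quoted there (\cite[Theorems A, B]{BG}) give $M = G/\Gamma$ for a connected solvable Lie group $G$ and a lattice $\Gamma \subset G$, and the Lorentzian Einstein metric $\g_M$ on $M$ pulls back to a bi-invariant Lorentzian Einstein metric $\g$ on $G$. Because $\Gamma$ is discrete, $\g$ is non-degenerate, so it induces an invariant Lorentzian scalar product $\met$ on $\frg$ which is Einstein in the sense of Section \ref{sec:nilinvariantEinstein}.

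Second, I would apply Corollary \ref{cor:Lorentz_Einstein} to conclude that $(\frg,\met)$ is abelian. Hence $G$ is a connected abelian Lie group, and a compact quotient $G/\Gamma$ must therefore be a flat torus (or more generally a quotient of $\RR^n$ by the lattice $\Gamma$).

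Finally, I would observe that a bi-invariant pseudo-Riemannian metric on an abelian Lie group is flat: by the Koszul formula for bi-invariant metrics one has $\nabla_X Y = \tfrac{1}{2}[X,Y]$ for left-invariant vector fields, so when $[\cdot,\cdot]=0$ the Levi-Civita connection vanishes on left-invariant fields and hence the curvature tensor of $\g$ vanishes identically. Since curvature flatness is a local property and descends to the quotient $M = G/\Gamma$, the manifold $M$ is flat, which is the desired conclusion. There is no real obstacle here beyond stringing these standard facts together; the work is entirely done by Corollary \ref{cor:Lorentz_Einstein}.
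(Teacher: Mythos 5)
Your proposal is correct and follows exactly the route the paper intends: the paper states this corollary as a direct consequence of Corollary \ref{cor:Lorentz_Einstein}, relying implicitly on the same chain (pullback to a bi-invariant Einstein scalar product on $\frg$, abelianness of $(\frg,\met)$, and flatness of bi-invariant metrics on abelian groups via $\nabla_X Y = \tfrac{1}{2}[X,Y]$). Your write-up merely makes explicit the steps the paper leaves to the reader, with no gaps.
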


In Example \ref{ex:kappa=0} we saw that non-nilpotent solvable
Lie groups $G$ with Einstein metrics exist.
The question is whether these Lie groups can give rise to compact
homogeneous quotients with Einstein metrics.
This requires the existence of a lattice $\Gamma\leq G$.
We may assume that $G$ is connected and simply connected.
A theorem by Mostow \cite[Theorem 5.5]{mostow70} then implies that
we can assume $G$ is linear and $\Gamma$ is given by matrices
with integer coefficients. In particular, the adjoint
action of $\Gamma$ on the nilradical $\frn$ of $\frg$ is given
by rational matrices.
If $G$ is not nilpotent, this means that for
a vector space complement $\fra$ of $\frn$ in $\frg$, there exist
non-zero $a\in\fra$ such that the
characteristic polynomial of $\exp(\ad(a))=\Ad(\exp(a))$ has
coefficients in $\QQ$.
This means that all the eigenvalues
\[
\e^{\lambda_i}, \e^{\zeta_i}, \e^{\bar{\zeta}_i}
\]
of $\exp(\ad(a))$ are algebraic numbers.
In dimensions $\leq 7$ this is impossible:
{
\renewcommand{\thethm}{\ref{thm:no_einstein_dim6}}
\begin{thm}
Let $M$ be a compact pseudo-Riemannian Einstein solvmanifold of
dimension less or equal to $7$.
Then $M$ is a nilmanifold.
\end{thm}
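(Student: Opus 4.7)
The plan is as follows. I argue by contradiction: suppose $M$ is not a nilmanifold, so $G$ is not nilpotent. By the preceding results, $M = G/\Gamma$ for a lattice $\Gamma$ in simply connected solvable $G$, the metric lifts to a bi-invariant Einstein scalar product $\met$ on $\frg$, and Proposition~\ref{prop:Einstein_solvable} forces $\kappa = 0$. Theorem~\ref{thm:einstein_dim} then gives $6 \leq \dim \frg \leq 7$ and $\dim \frn \geq 5$. I fix a vector-space complement $\fra$ of $\frn$ in $\frg$ and, by Mostow \cite[Theorem 5.5]{mostow70}, take $G$ linear with $\Ad(\Gamma)|_\frn$ acting by integer matrices. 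The projection $\pi\colon G \to G/N$ maps $\Gamma$ to a cocompact lattice in $G/N \cong \fra$ that is Zariski-dense in $\fra$; since the semisimple part of $\ad(x)$ depends only on the class of $x$ modulo $\frn$, and $\frg$ is non-nilpotent, the set $\{a \in \fra : \ad(a) \text{ is nilpotent}\}$ is a proper Zariski-closed subvariety, so I can choose $\gamma \in \Gamma$ of the form $\gamma = \exp(a)\,u$ with $u \in N$ and $\ad(a)$ non-nilpotent.

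The first key step is to show that $\dim W_1 = 4$, where $\sigma_a = \ad(a)_{\rm ss}$, $W_1 = \im \sigma_a$ and $W_0 = \ker \sigma_a$. The proof of Theorem~\ref{thm:einstein_dim} already gives $\dim W_1 \geq 4$, and $\met$-skew-symmetry of $\sigma_a|_{W_1}$ forces $\dim W_1$ to be even, since the non-zero eigenvalues of $\sigma_a|_{W_1}$ come in signed or conjugate pairs and each pair occupies an even-dimensional invariant subspace. For an upper bound, Proposition~\ref{prop:zgzn} supplies a non-zero totally isotropic central ideal $\fri \subseteq \frj_0 \subseteq \frn$ of $\frg$; since $\fri \subseteq \zen(\frg) \subseteq \ker \ad(a) \subseteq W_0$ and also $a \in \ker \ad(a) \subseteq W_0$ with $\fri \subseteq \frn \not\ni a$, we have $\dim W_0 \geq 2$, hence $\dim W_1 \leq \dim \frg - 2 \leq 5$. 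Evenness then forces $\dim W_1 = 4$.

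The second key step is to extract algebraicity of the eigenvalues of $\exp(\sigma_a|_{W_1})$. Since $\Ad(u)|_\frn$ is unipotent, Lie's theorem applied to the solvable subgroup $\Ad(G)|_\frn \subseteq \GL(\frn)$ provides a complex basis of $\frn \otimes \CC$ in which both $\Ad(\exp a)|_\frn = \exp(\ad(a)|_\frn)$ and $\Ad(u)|_\frn$ are upper triangular; then $\Ad(\gamma)|_\frn$ is upper triangular with the same diagonal as $\Ad(\exp a)|_\frn$, and so the two share the same multiset of eigenvalues. Since $\Ad(\gamma)|_\frn \in \GL(\frn, \ZZ)$ its eigenvalues are algebraic, and therefore so are the non-trivial eigenvalues $e^{\lambda_i}, e^{\zeta_j}, e^{\bar\zeta_j}$ of $\exp(\sigma_a|_{W_1})$. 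Now $\sigma_a|_{W_1}$ is a non-nilpotent semisimple element of $\sso(W_1, \met|_{W_1}) \cong \sso(p,q)$ with $p + q = 4$, which after conjugation takes the PWZ form~\eqref{eq:PWZ}, and whose eigenvalues satisfy the Einstein relation~\eqref{eq:kappa_square}; Lemma~\ref{lem:algebraic} applied with $n = 4$ and $t = 1$ then delivers the contradiction via Gelfond--Schneider.

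The main obstacle, in my view, is the upper bound $\dim W_1 \leq 4$: without the dimension-two contribution to $W_0$ coming jointly from $\RR a$ and the central isotropic ideal $\fri$, one would be stuck with $\dim W_1 = 6$ in the case $\dim\frg = 7$, $\dim\fra = 1$, where Lemma~\ref{lem:algebraic} as stated no longer applies and a delicate case-by-case analysis of the possible PWZ block decompositions of $\sigma_a|_{W_1}$ (with Gelfond--Schneider applied individually to eigenvalue pairs of the form $(e^\xi, e^{i\xi})$ whenever they can be located in the spectrum) would be required.
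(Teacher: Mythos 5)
Your proof is correct and follows essentially the same route as the paper's: bound the dimensions via Theorem~\ref{thm:einstein_dim}, use Mostow's theorem to extract algebraicity of the eigenvalues of $\exp(\ad(a))$ from the integrality of the lattice action on $\frn$, and conclude with Lemma~\ref{lem:algebraic} (Gelfond--Schneider). You are in fact somewhat more careful than the paper at two points --- the factorisation $\gamma=\exp(a)u$ together with a Lie-theorem triangularisation to transfer algebraicity of eigenvalues from $\Ad(\gamma)|_{\frn}$ to $\exp(\ad(a))|_{\frn}$, and pinning down $\dim W_1=4$ for the semisimple part where the paper only records $\dim\im\ad(a)\le 5$ --- but both refinements land inside the range $n\le 5$ where the lemma applies, so the substance is unchanged.
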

\addtocounter{thm}{-1}
}

\begin{proof}
$M$ is of the form $M=G/\Gamma$, where $\Gamma$ is a lattice in $G$.
Assume that $G$ is not nilpotent.
By Theorem \ref{thm:einstein_dim} we only need to consider the case
where $G$ is a six- or seven-dimensional solvable Lie group.
We may further assume that $G$ is simply
connected.

The Einstein metric on $M$ induces an Einstein scalar product $\met$
on the Lie algebra $\frg$ of $G$.
Consider the vector space decomposition $\frg=\fra\oplus\frn$ from \eqref{eq:g=a+n}.
As a consequence of Theorem \ref{thm:einstein_dim}, $6\geq\dim\frn\geq5$
and $2\geq\dim\fra\geq1$.
All elements $x\in\frn$ satisfy the Einstein condition
\eqref{eq:kappa_square}.

Let $0\neq a\in\fra$, and let $\fri$ be a non-zero
central ideal of $\frg$ contained in $\zen(\frn)\cap[\frg,\frn]$
(this exists by Proposition \ref{prop:j0}
because $\frg$ is not abelian).
Then $\dim\ker\ad(a)\geq 2$,
because $\ker\ad(a)\supseteq\fra\oplus\fri$.
Also, $\ad(a)$ is not nilpotent and $W_1=\im\ad(a)$ is at most
five-dimensional.
%

By Mostow's theorem, there exists a non-zero $a\in\fra$ such
that $\exp(\ad(a))$ is conjugate to a
matrix in $\SL(n,\ZZ)$, where $n=6$ or $7$.
But application of Lemma \ref{lem:algebraic} to $\ad(a)|_{W_1}$
shows that $\exp(t\ad(a))$ is not conjugate to a matrix in
$\SL(n,\ZZ)$ for any $t\in\RR$.
It follows that $G$ must be nilpotent, which proves
the theorem.
\end{proof}

%
%

Observation \ref{obs:schanuel} suggests the following:

{
\renewcommand{\thethm}{\ref{conj:no_einstein}}
\begin{conjecture}
Every compact pseudo-Riemannian Einstein solvmanifold
is a nilmanifold.
\end{conjecture}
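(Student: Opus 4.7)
Assuming Schanuel's Conjecture, the plan is to run the argument of Theorem~\ref{thm:no_einstein_dim6} in arbitrary dimension by invoking the Schanuel-strengthened form of Lemma~\ref{lem:algebraic} recorded in Observation~\ref{obs:schanuel}. By the results of Section~\ref{sec:solvable}, a compact pseudo-Riemannian Einstein solvmanifold $M$ has the form $M=G/\Gamma$ for a connected, simply connected solvable Lie group $G$ and a lattice $\Gamma\leq G$; the metric lifts to a bi-invariant Einstein pseudo-Riemannian metric on $G$, so the induced scalar product $\met$ on $\frg$ is an invariant Einstein scalar product with $\kappa=0$, and Theorem~\ref{thm:einstein_dim} applies.

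Suppose for contradiction that $G$ is not nilpotent. In the decomposition $\frg=\fra\oplus\frn$ of \eqref{eq:g=a+n} this gives $\fra\neq\{0\}$. Mostow's linearization theorem \cite[Theorem 5.5]{mostow70} lets us assume $G$ is linear and that $\Ad(\Gamma)$ is conjugate into $\SL(\dim\frg,\ZZ)$; since $\Gamma$ projects to a lattice in the abelian quotient $G/N$, where $N=\exp\frn$, the construction in the proof of Theorem~\ref{thm:no_einstein_dim6} yields a nonzero $a\in\fra$ such that $\exp(\ad(a))$ is conjugate to an element of $\SL(\dim\frg,\ZZ)$. In particular, every eigenvalue of $\ad(a)$ on $\frg$, and hence of its restriction to $W_1=\im\sigma_a$, is the logarithm of an algebraic number. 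The Einstein condition~\eqref{eq:kappa_square} forces these eigenvalues to satisfy the relation~\eqref{eq:eigenvalues}. Applying the extension of Lemma~\ref{lem:algebraic} provided by Observation~\ref{obs:schanuel} to $\ad(a)|_{W_1}$ then yields that $\exp(t\,\ad(a))$ is not conjugate to any matrix in $\SL(\dim\frg,\ZZ)$ for any $t\in\RR$, contradicting the case $t=1$. Hence $G$ must be nilpotent, and $M$ is a nilmanifold.

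Modulo Schanuel, the main step requiring care is already encapsulated in the proof of Theorem~\ref{thm:no_einstein_dim6}: the passage from the existence of the lattice $\Gamma$ to a specific $a\in\fra$ whose adjoint exponential lies in $\SL(\dim\frg,\ZZ)$ up to conjugacy. Nothing new is needed beyond Mostow's theorem and the projection of $\Gamma$ to $G/N$. The genuinely hard obstacle is to remove the dependence on Schanuel's Conjecture altogether. The Einstein identity~\eqref{eq:eigenvalues} is merely a $\QQ$-linear relation among the real and imaginary parts of the eigenvalues of $\ad(a)$, and the task is to deduce that the exponentials of these eigenvalues cannot all be algebraic simultaneously. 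In low dimensions this is a small enough number of constraints for the Gelfond--Schneider Theorem to bite, but as the example following Lemma~\ref{lem:algebraic} suggests, the higher-dimensional case involves many independent eigenvalues and seems to require precisely the kind of input furnished by Schanuel.
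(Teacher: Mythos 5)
Your argument is essentially the paper's own reasoning: reduce to a lattice element $a\in\fra$ with $\exp(\ad(a))$ conjugate into $\SL(n,\ZZ)$ via Mostow's theorem, note that the Einstein condition forces the eigenvalue relation \eqref{eq:eigenvalues}, and invoke Observation \ref{obs:schanuel} in place of Lemma \ref{lem:algebraic}. The one thing to be clear about is that this does not prove the statement: it proves the implication ``Schanuel's Conjecture $\Rightarrow$ Conjecture \ref{conj:no_einstein},'' which is precisely what the paper records, and is why the statement is labelled a conjecture rather than a theorem --- the paper's unconditional result (Theorem \ref{thm:no_einstein_dim6}) only covers dimension $\leq 7$, where Gelfond--Schneider suffices. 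You correctly identify this, and your closing paragraph rightly locates the genuine open difficulty in removing the dependence on Schanuel; as the paper's remark notes, the conjecture is in fact equivalent to Lemma \ref{lem:algebraic} holding for all $n$.
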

\addtocounter{thm}{-1}
}


\begin{remark}
Note that finding a counterexample $X$ to the statement of
Lemma \ref{lem:algebraic} for dimensions $n\geq 6$ would allow
us to construct a Lie algebra $\frg$ with Einstein scalar product
as in Example \ref{ex:KO1} with $\delta_a=X$.
The simply connected Lie group $G$ with Lie
algebra $\frg$ would not be nilpotent and would contain a lattice
$\Gamma=\exp(tX)\ltimes\ZZ^{n+1}$, for a certain $t\in\RR$.
This tells us that Conjecture \ref{conj:no_einstein} is in
fact equivalent to the statement of Lemma \ref{lem:algebraic} for all $n$.
\end{remark}


\section{Semisimple isometry groups}
\label{sec:semisimple}

We now study homogeneous pseudo-Riemannian Einstein
manifold of finite volume $M=G/H$ where $G$ is semisimple.
The metric $\g_M$ on $M$ induces a symmetric bilinear form $\met$
on $\frg$ with metric
radical $\frg^\perp=\frh$, the Lie algebra of $H$.
As usual, $G$ acts almost effectively, so that
$\frh$ contains no non-zero ideals of $\frg$.

Assume that $G$ is simply connected. Then $G$ is a product
\begin{equation}
G=K\times S,
\label{eq:GKS}
\end{equation}
where $K$ is compact semisimple and $S$ is semisimple without
compact factors. Let $\frk$, $\frs$ denote their respective
Lie algebras.


\begin{lem}\label{lem:S_properties}
$\met$ is $\frs$-invariant, $\frk\perp\frs$,
and $\frs\cap\frg^\perp=\zsp$.
In particular, if the metric $\g$ on $M$ is Einstein, then the
restriction $\met_\frs$ of $\met$ to $\frs$ is Einstein and thus
a multiple of the Killing form of $\frs$.
\end{lem}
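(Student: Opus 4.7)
My plan is to dispense with the first three assertions using Theorem \ref{thm:invariance} together with the commutation $[\frk, \frs] = 0$, and then to use the structural consequences to reduce the Einstein assertion to Proposition \ref{prop:Einstein_solvable}.

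For $\frs$-invariance, I would apply Theorem \ref{thm:invariance}(1), noting that the solvable radical of $\frg$ is trivial in the semisimple setting. For $\frk \perp \frs$, I would feed $[\frk, \frs] = 0$ into this invariance: given $x \in \frk$ and $y, z \in \frs$, one has $\langle x, [y,z] \rangle = -\langle [y,x], z \rangle = 0$, and then $[\frs, \frs] = \frs$ yields $\frk \perp \frs$. For the vanishing $\frs \cap \frg^\perp = \zsp$, this orthogonality identifies the intersection with the metric radical of $\met|_\frs$; Theorem \ref{thm:invariance}(2) then says this radical is $\mathrm{ad}_\frs$-invariant, hence an ideal of $\frs$, and since $\frk$ centralizes $\frs$ it is in fact an ideal of $\frg$ contained in $\frh = \frg^\perp$, so almost-effectiveness forces it to vanish.

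A convenient byproduct is $\frh \subset \frk$: for $h \in \frh$, the $\frs$-component lies in $\frs \cap \frg^\perp = \zsp$ thanks to $\frk \perp \frs$. I would then fix a reductive decomposition $\frg = \frh \oplus \frm$ with $\frm = \frm_\frk \oplus \frs$, where $\frm_\frk$ is an $\Ad(H)$-invariant complement of $\frh$ in $\frk$. Because $\frk$ and $\frs$ commute and are $\met$-orthogonal, every term in the standard homogeneous Ricci formula that couples $\frm_\frk$ to $\frs$ drops out of the $\frs$-block, and $\Ric_M$ restricted to $\frs$ agrees with the Ricci tensor of the bi-invariant pseudo-Riemannian metric that $\met|_\frs$ induces on any Lie group with Lie algebra $\frs$. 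The Einstein equation $\Ric_M = \lambda\, \g_M$ then says this bi-invariant metric is Einstein, and Proposition \ref{prop:Einstein_solvable} forces $\met|_\frs$ to be a non-zero multiple of the Killing form of $\frs$.

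The step I expect to be most delicate is the Ricci-tensor reduction in the last paragraph: verifying from a concrete homogeneous Ricci formula that the contributions indexed by $\frm_\frk$ genuinely annihilate the $\frs$-block. The algebraic inputs $[\frk, \frs] = 0$, $\frk \perp \frs$, and $\frh \subset \frk$ make this a careful bookkeeping exercise rather than a conceptual difficulty.
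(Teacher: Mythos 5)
Your treatment of the first three assertions coincides with the paper's proof almost verbatim: invariance from Theorem \ref{thm:invariance}, orthogonality $\frk\perp\frs$ from $[\frk,\frs]=0$ together with $[\frs,\frs]=\frs$, and the vanishing of $\frs\cap\frg^\perp$ by identifying it with an ideal of $\frg$ sitting inside $\frh$ and invoking almost-effectiveness (the paper phrases this via $\pi_\frs(\frg^\perp)=\frg^\perp\cap\frs$, but the content is the same, and your byproduct $\frh\subset\frk$ is exactly the paper's final conclusion). Where you genuinely diverge is the Einstein clause. The paper does not prove it inside the lemma at all; the argument is deferred to the proof of Theorem \ref{thm:Einstein_semisimple}, where the orthogonality and commutation of $K$ and $S$ give a global pseudo-Riemannian product splitting $M=M_K\times M_S$, each factor of an Einstein product is Einstein, and since $M_S=S/\Gamma$ with $\Gamma$ discrete the Einstein metric on $M_S$ is literally a non-degenerate bi-invariant metric on $S$ itself, so Proposition \ref{prop:Einstein_solvable} applies directly. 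You instead propose to extract the same conclusion infinitesimally from the homogeneous Ricci formula for a reductive decomposition $\frm=\frm_\frk\oplus\frs$. That route can be made to work (note that $\met|_\frs$ is non-degenerate precisely because $\frs\cap\frg^\perp=\zsp$ and $\frk\perp\frs$, which you need before quoting the bi-invariant Ricci formula $\Ric|_\frs=-\tfrac14\kappa_\frs$), but the decoupling of the $\frs$-block is exactly the computation you flag as delicate and do not carry out; the paper's product-splitting argument sidesteps it entirely and is the cleaner path. If you keep your version, you should verify the existence of the $\Ad(H)$-invariant complement $\frm_\frk$ (available, e.g., as the orthogonal complement of $\frh$ in $\frk$ with respect to the negative definite Killing form of $\frk$) and then actually check term by term that the mixed contributions vanish.
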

\begin{proof}
The invariance of $\met$ is part of Theorem \ref{thm:invariance}
(it also follows immediately from Mostow \cite[Lemma 3.1 (1)]{mostow3}).

To see the orthogonality of $\frk$ and $\frs$,
let $k\in\frk$ and $s\in\frs$. We may assume $s=[s_1,s_2]$ for
some $s_1,s_2\in\frs$.
Then
$\langle k,s\rangle = \langle k,[s_1,s_2]\rangle
=-\langle[s_1,k],s_2\rangle =0$. So $\frk\perp\frs$.

Let $x_0\in\frg^\perp$ and write $x_0=k_0+s_0$ with $k_0\in\frk$,
$s_0\in\frs$.
Then $s_0\perp\frs$ since $k_0\perp\frs$, as was just shown.
Since also $s_0\perp\frk$, it follows that $s_0\in\frg^\perp$.
This means $\pi_{\frs}(\frg^\perp)=\frg^\perp\cap\frs$, where
$\pi_{\frs}:\frg\to\frs$ denotes the canonical projection.
Then $\frg^\perp\cap\frs$ is an ideal in $\frg$, as $\met$ is
$\frs$-invariant and $[\frk,\frs]=\zsp$.
But $\frg^\perp$ contains no non-zero ideals of $\frg$, so
$\pi_{\frs}(\frg^\perp)=\frg^\perp\cap\frs=\zsp$, which means
$\frg^\perp\subset\frk$.
\end{proof}

As $K$ is compact, the projection $\pi_S(H)$ of $H$ to $S$ is a
closed subgroup of $S$. Then the above lemma and the Borel Density
Theorem imply:

\begin{prop}\label{prop:semisimple_stabilizer}
Let $H^\circ$ denote the identity component of $H$.
Then $H^\circ\subset K$.
Let $\pi_S$ denote the projection map to $S$.
Then $\pi_S(H)$ is a lattice in $S$.
\end{prop}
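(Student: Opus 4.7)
The plan is to derive both conclusions from Lemma~\ref{lem:S_properties}, which yields $\frh = \frg^\perp \subset \frk$, together with the compactness of $K$ and the finiteness of the invariant volume on $M$.

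The first claim is immediate: as $G = K \times S$ is a direct product and $\frh \subset \frk$, the connected Lie subgroup integrating $\frh$ lies inside the factor $K$, so $H^\circ \subset K$.

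For the second claim, I would first show that $\pi_S(H)$ is discrete. The subgroup $KH$ is closed in $G$ by compactness of $K$ and closedness of $H$, and its Lie algebra is $\frk + \frh = \frk$. Therefore $(KH)^\circ = K$, making $KH/K$ a zero-dimensional, and hence countable, Lie group. Since $K = \ker \pi_S$, the projection induces a bijective continuous homomorphism $KH/K \to \pi_S(H)$, so $\pi_S(H)$ is a countable, closed subgroup of $S$; such a subgroup of a Lie group is necessarily discrete.

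For finite covolume, I use the $S$-equivariant surjection $\Phi : G/H \to S/\pi_S(H)$, $gH \mapsto \pi_S(g)\pi_S(H)$, whose fiber over the identity coset is $KH/H \cong K/(K \cap H)$, compact because $K$ is. Pushing the finite $G$-invariant measure on $G/H$ forward along $\Phi$ produces a finite $S$-invariant measure on $S/\pi_S(H)$ (using unimodularity of the semisimple group $S$), so $\pi_S(H)$ is a lattice. The main technical point is verifying that the pushforward is genuinely $S$-invariant and finite, a routine measure-theoretic task given the compact fibers. The Borel Density Theorem, flagged in the text just before the proposition, can alternatively be invoked to streamline discreteness, or to deliver Zariski density of $\pi_S(H)$ in $S$ as a downstream consequence used in subsequent arguments; however, it is not strictly needed once Lemma~\ref{lem:S_properties} is in hand.
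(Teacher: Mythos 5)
Your proposal is correct. The paper itself gives no detailed proof: it records that $\pi_S(H)$ is closed in $S$ because $K$ is compact, and then asserts that Lemma~\ref{lem:S_properties} together with the Borel Density Theorem yields the proposition. Your argument reaches the same conclusions by a more elementary route: you get discreteness of $\pi_S(H)$ directly from $\frh=\frg^\perp\subset\frk$ (so $H^\circ\subset K=\ker\pi_S$, whence $\pi_S(H)\cong H/(H\cap K)$ is countable and closed, hence discrete), and finite covolume by pushing the finite invariant measure on $G/H$ forward along the $S$-equivariant fibration $G/H\to S/\pi_S(H)$. This trades the Borel Density Theorem (whose role here would be to force $\pi_S(H)^\circ$ to be a normal, hence by the effectivity hypothesis trivial, subgroup of $S$) for the softer observation that a closed countable subgroup of a Lie group is discrete; what the paper's citation buys is brevity and the extra output of Zariski density of $\Gamma$, which your version would have to recover afterwards from the fact that $\Gamma$ is a lattice. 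Two small points of hygiene: the identity $\mathrm{Lie}(KH)=\frk+\frh$ is not automatic for a product of subgroups and should be justified -- here it follows because $K$ is compact and normal, so $KH/K\cong H/(H\cap K)$ topologically and the latter is countable since $H^\circ\subset K$; and the appeal to unimodularity of $S$ is unnecessary, since the pushforward of an invariant finite measure along an equivariant map is automatically invariant and finite.
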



{
\renewcommand{\thethm}{\ref{thm:Einstein_semisimple}}
\begin{thm}
Let $M$ be a pseudo-Riemannian homogeneous Einstein manifold
of finite volume
and $G$ a semisimple, connected and simply connected Lie group
acting transitively and almost effectively by isometries on $M$.
Identify $M$ with $G/H$ for a closed subgroup $H$ of $G$.
Let $G=K\times S$ be the decomposition of $G$ into a compact
semisimple factor $K$ and a semisimple factor $S$ without
compact simple factors.
Then:
\begin{enumerate}
\item
The identity component $H^\circ$ of $H$ is contained in $K$.
\item
$M=\hat{M}/\Gamma$, where $\hat{M}$ is a pseudo-Riemannian product of
Einstein manifolds
\[
\hat{M}=M_K\times S,
\]
with $M_K=K/(H\cap K)$,
and the Einstein metric on $S$ is a multiple of the Killing form on $S$.
\item
$\Gamma$ is the graph of a homomorphism
from a lattice in $S$ into $K$.
\end{enumerate}
\end{thm}
\addtocounter{thm}{-1}
}
\begin{proof}
By Proposition \ref{prop:semisimple_stabilizer}, $H^\circ\subset K$
and $H$ projects to a lattice $\pi_S(H)\cong H/(H\cap K)$ in $S$.
Hence $\hat{M}=M_K\times S$ is a cover of $M$, that is,
$M=\hat{M}/\Gamma$ for some discrete subgroup $\Gamma$ of $G$.
This $\Gamma$ is isomorphic to $\pi_S(H)$ and thus necessarily of the form
required in the theorem.
Moreover, the metric $\g_M$ pulls back to
a $G$-invariant pseudo-Riemannian Einstein metric $\hat{\g}$ on
$\hat{M}=M_K\times S$.

By $\frk\perp\frs$ (Lemma \ref{lem:S_properties}), by the
$G$-invariance of $\hat{\g}$ and because $S$ and $K$ commute,
the $K$-orbits and the $S$-orbits are orthogonal at every point
in $\hat{M}$.
This means $\hat{M}=M_K\times S$ is a pseudo-Riemannian product.
As $\hat{M}$ is an Einstein manifold, both of
its orthogonal factors $M_K$ and $S$ are Einstein manifolds.
Since the pullback $\g_S$ of the metric $\g_M$ to $S$ is non-degenerate and
bi-invariant (Theorem \ref{thm:invariance}, Corollary \ref{cor:no_compact}),
$S$ itself is an Einstein manifold and hence $\g_S$ is given by a
multiple of the Killing form.
\end{proof}







\end{document}